\documentclass[12pt,oneside,english]{amsart}
\usepackage{lmodern}
\usepackage[T1]{fontenc}
\usepackage[latin9]{inputenc}
\usepackage{babel}
\usepackage{units}
\usepackage{mathtools}
\usepackage{enumitem}
\usepackage{amsthm}
\usepackage{amssymb}
\usepackage[a4paper]{geometry}
\usepackage[pdfusetitle,
 bookmarks=true,bookmarksnumbered=false,bookmarksopen=false,
 breaklinks=false,pdfborder={0 0 0},pdfborderstyle={},backref=false,colorlinks=false]
 {hyperref}

\makeatletter
\numberwithin{equation}{section}
\numberwithin{figure}{section}
\theoremstyle{plain}
\newtheorem{thm}{\protect\theoremname}[section]
\theoremstyle{definition}
\newtheorem{example}[thm]{\protect\examplename}
\theoremstyle{plain}
\newtheorem{prop}[thm]{\protect\propositionname}
\theoremstyle{definition}
\newtheorem{defn}[thm]{\protect\definitionname}
\theoremstyle{remark}
\newtheorem{rem}[thm]{\protect\remarkname}
\theoremstyle{plain}
\newtheorem{lem}[thm]{\protect\lemmaname}
\theoremstyle{plain}
\newtheorem{cor}[thm]{\protect\corollaryname}

\usepackage{faktor}    
\usepackage{amsmath}
\usepackage{scalerel} 

\makeatother

\providecommand{\corollaryname}{Corollary}
\providecommand{\definitionname}{Definition}
\providecommand{\examplename}{Example}
\providecommand{\lemmaname}{Lemma}
\providecommand{\propositionname}{Proposition}
\providecommand{\remarkname}{Remark}
\providecommand{\theoremname}{Theorem}

\begin{document}
\global\long\def\CC{\mathbb{C}}%
\global\long\def\norm#1{\left\Vert #1\right\Vert }%
\global\long\def\sq{\subseteq}%
\global\long\def\HHH{\mathcal{H}}%
\global\long\def\st{\;|\;}%
\global\long\def\inj{\hookrightarrow}%
\global\long\def\ph{\varphi}%
\global\long\def\RR{\mathbb{R}}%
\global\long\def\NN{\mathbb{N}}%
\global\long\def\lim#1{\underset{#1}{lim}}%
\global\long\def\ZZ{\mathbb{Z}}%
\global\long\def\FF{\mathbb{F}}%
\global\long\def\frakm{\mathfrak{m}}%
\global\long\def\lto{\longrightarrow}%
\global\long\def\conv#1{\underset{#1}{\lto}}%
\global\long\def\mf#1{\mathfrak{#1}}%
\global\long\def\cL{\{\}}%
\global\long\def\ep{\varepsilon}%
\global\long\def\cM{\{\}}%
\global\long\def\cU{\mathcal{U}}%
\global\long\def\KK{\mathbb{K}}%

\title[Linear sofic representations of amenable algebras]{Linear sofic representations\\
of amenable algebras}
\author{Benjamin Bachner}
\begin{abstract}
We study the notion of linear sofic representations for algebras,
analogous to the concept of sofic representations for groups. We prove
that for a finitely generated amenable $K$-algebra with no zero divisors,
all linear sofic representations are conjugate. This provides an algebraic
analogue to Elek and Szab\'o's theorem for amenable groups. The proof
relies on a "linear monotiling" technique, constructed using a
theorem by Bre\v{s}ar, Meshulam and \v{S}emrl on locally linearly dependent
operators. Finally, we apply this uniqueness result to the problem
of weak stability in the rank metric, showing that the group algebra
of an amenable group is weakly stable if and only if the group is
residually finite.
\end{abstract}

\maketitle

\section{Introduction}

Elek and Szab\'o have proved \cite{MR2823074} that for a finitely generated
sofic group $\Gamma$, the group $\Gamma$ is amenable if and only
if all sofic representations\footnote{Sometimes referred to in the literature as sofic approximations. Throughout the paper we use the terminology of (linear) sofic representations.} of $\Gamma$ are conjugate. This rigidity
result establishes a deep connection between the asymptotic properties
of finite approximations and the amenability of the group. In this
paper, we investigate the linear algebraic analogue of this phenomenon
for associative algebras. The linear analogue for soficity was introduced
by Arzhantseva and Paunescu \cite{AP_LinearSofic}, using the normalized
rank metric $d\left(A,B\right)=rk\left(A-B\right)=\frac{1}{n}rank\left(A-B\right)$
in place of the normalized Hamming metric. Amenability for algebras
is defined by the existence of a F{\o}lner sequence of subspaces (see
Section \ref{subsec:Amenable}). 

We study the notion of a linear sofic representation for algebras;
An asymptotic homomorphism of a $K$-algebra $A$ is a sequence of
linear maps $\phi_{k}:A\to M_{n_{k}}\left(K\right)$ satisfying 
\[
rk\left(\phi_{k}\left(ab\right)-\phi_{k}\left(a\right)\phi_{k}\left(b\right)\right)\conv{k\to\infty}0\quad\forall a,b\in A.
\]
We call an asymptotic homomorphism $\left(\phi_{k}\right)$ a \emph{linear
sofic representation} if it maximally separates elements, so that $rk\left(\phi_{k}\left(a\right)\right)\conv{k\to\infty}1$
for any nonzero $a\in A$ (our definition is actually stronger than
this; See Section \ref{sec:LSA}). We prove the following analog
of the ``only if'' part of Elek and Szab\'o's theorem. 
\begin{thm}
\label{thm:main}Let $K$ be any field and $A$ be a finitely generated
amenable $K$-algebra without zero divisors. Then there is a unique
conjugacy class of linear sofic representations of $A$.
\end{thm}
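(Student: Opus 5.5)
We follow the strategy of Elek and Szab\'o, with linear algebra replacing combinatorics. Fix two linear sofic representations $\phi_{k}:A\to M_{n_{k}}(K)$ and $\psi_{k}:A\to M_{n_{k}}(K)$. We must produce invertible $T_{k}$ with $rk\bigl(T_{k}\phi_{k}(a)T_{k}^{-1}-\psi_{k}(a)\bigr)\to0$ for every $a\in A$. It suffices to show that each of them is asymptotically conjugate to a fixed ``model'' representation built from a F{\o}lner subspace.

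Fix a finite generating subspace $S\ni1$ of $A$ and $\ep>0$. Amenability gives a finite-dimensional subspace $W\sq A$ with $\dim(SW+W)\le(1+\ep)\dim W$. The model acts on $K^{n}\cong W\otimes K^{m}$ (up to an $\ep n$-dimensional error) by left multiplication truncated to $W$, extended arbitrarily on the boundary. We then want to show that for large $k$ the space $K^{n_{k}}$ decomposes, up to a subspace of normalized dimension $O(\ep)$, as a direct sum $\bigoplus_{j}\phi_{k}(W)v_{j}$ of ``tiles''. The key requirement is that the evaluation map $W\to\phi_{k}(W)v_{j}$, $w\mapsto\phi_{k}(w)v_{j}$, is injective and intertwines $\phi_{k}(s)$ with multiplication by $s$ on most of $W$. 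Once such a monotiling exists for both $\phi_{k}$ and $\psi_{k}$, the map sending $\phi_{k}(w)v_{j}\mapsto\psi_{k}(w)u_{j}$ (matching tiles by count, with the leftover pieces of dimension $O(\ep n_{k})$ handled arbitrarily) is invertible up to small rank. It conjugates $\phi_{k}(s)$ to $\psi_{k}(s)$ off a subspace of normalized dimension $O(\ep)$. Since this holds for the generators $s\in S$, it extends to all of $A$ because the asymptotic homomorphism property controls products. A diagonal argument in $\ep\to0$ finishes the proof.

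The main obstacle is building the linear monotiling. Because $A$ has no zero divisors, $\phi_{k}(w)$ has normalized rank close to $1$ for each nonzero $w\in W$. We need vectors $v$ for which $w\mapsto\phi_{k}(w)v$ is injective on all of $W$ simultaneously, and this must happen robustly. Here we invoke the theorem of Bre\v{s}ar, Meshulam and \v{S}emrl on locally linearly dependent operators. If the operators $\{\phi_{k}(w_{i})\}$ (for a basis $w_{i}$ of $W$) were locally linearly dependent on a large subspace, then some nontrivial combination would have small rank, i.e.\ $rk(\phi_{k}(w))$ would be bounded away from $1$ for some nonzero $w\in W$. That contradicts maximal separation. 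The contradiction must be made uniform, using the stronger form of the sofic definition (Section~\ref{sec:LSA}), which controls ranks uniformly on finite-dimensional subspaces.

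We then build the tiles greedily. Choose $v_{1}$ generic, then choose $v_{j}$ generic modulo $\sum_{i<j}\phi_{k}(W)v_{i}$, applying the same rank argument to the quotient space. We continue until the remaining quotient has dimension $\le\ep n_{k}$. Two facts are needed here. Directness of the sum follows from the genericity at each step. The near-equivariance of each tile follows from the F{\o}lner property together with $rk\bigl(\phi_{k}(sw)-\phi_{k}(s)\phi_{k}(w)\bigr)\to0$, after discarding the tiles whose starting vectors land in the kernels of these small-rank defects. The most delicate part is controlling how much dimension these discarded tiles consume.
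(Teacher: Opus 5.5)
Your architecture matches the paper's. You get root vectors from the Bre\v{s}ar--Meshulam--\v{S}emrl (BMS) theorem, build tiles greedily modulo the span of the earlier tiles, match tiles by a conjugator $\phi_{k}(w)v_{j}\mapsto\psi_{k}(w)u_{j}$, and use the F{\o}lner property to get agreement on $W\cap m_{s}^{-1}(W)$. However, the step you call the most delicate is left open, and the mechanism you sketch for it fails. You pick the starting vectors $v_{j}$ generically, then discard tiles where a defect $D=\phi_{k}(sw)-\phi_{k}(s)\phi_{k}(w)$ is nonzero at $v_{j}$. (You wrote ``land in the kernels'', but $v_{j}\in\ker D$ is the good case.) A small $\mathrm{rk}(D)$ gives no bound on the number of bad tiles. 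A rank-one $D=xf^{T}$ is nonzero at every $v_{j}$ with $f(v_{j})\neq0$, and a generic choice guarantees exactly that, so a priori every tile is discarded. As written, the near-equivariance of the tiling, which is the heart of the argument, is not established.

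The missing idea is to build multiplicativity into the choice of the root vectors. A $d$-approximation comes with a subspace $U$, of codimension at most $n_{k}/d$, on which $\phi_{k}(xy)u=\phi_{k}(x)\phi_{k}(y)u$ holds \emph{exactly} for $x,y\in A_{\le d}$. You already restrict to ``a large subspace'' in the BMS step; take it to be $U$.

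For nonzero $x\in A_{\le d}$ one still has $\mathrm{rank}(\phi_{k}(x)|_{U})\ge(1-\frac{2}{d})n_{k}$. Hence modulo the current tile span $E$, as long as $\dim E\le(1-\frac{2}{d})n_{k}-\dim W$, every nontrivial combination of the $\pi\circ\phi_{k}(w_{i})|_{U}$ has rank at least $\dim W$. BMS then yields the next root vector inside $U$; existence is all you need, and genericity is meaningless over a finite field anyway. With this choice, $\phi_{k}(s)\phi_{k}(w)v_{j}=\phi_{k}(sw)v_{j}$ holds exactly for $s\in S$ and $w\in W$. Nothing is discarded, and the conjugator satisfies $M\phi_{k}(s)M^{-1}=\psi_{k}(s)$ on $\bigoplus_{j}\psi_{k}\bigl(W\cap m_{s}^{-1}(W)\bigr)u_{j}$.

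Alternatively, you can keep arbitrary starting vectors and skip discarding altogether. On that subspace, $M\phi_{k}(s)M^{-1}-\psi_{k}(s)$ takes values in $\sum_{i}\bigl(M\,\mathrm{Im}\,D_{i}^{\phi}+\mathrm{Im}\,D_{i}^{\psi}\bigr)$, where the $D_{i}$ are the defects for a basis $w_{i}$ of $W\cap m_{s}^{-1}(W)$. This space has dimension at most $2\dim W\cdot\max_{i}\mathrm{rk}(D_{i})\cdot n_{k}$, which is $o(n_{k})$ for fixed $W$.

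Finally, your leftover bound needs $\dim W=o(n_{k})$, that is, $n_{k}\to\infty$. The bounded case must be treated separately, as the paper does: there $A$ is a finite-dimensional division algebra and the approximations are eventually genuine representations.
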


We develop a method of ``linear tilings'' for linear sofic representations,
which is analogous to the classical technique of Ornstein and Weiss
\cite{MR910005} of quasi-tiling amenable groups, the latter of which
was implemented by Elek \cite{MR2258227} for sofic representations
of amenable groups. The linear version of tiling relies on a theorem
by Bre\v{s}ar, Meshulam and \v{S}emrl in the study of locally linearly dependent
operators (Section \ref{subsec:LLD}). This linear version turns out
to be much more efficient than the classical tilings for groups; See
Remark \ref{rem:efficient}.

While Elek and Szab\'o established an equivalence for groups, the converse
in the linear setting remains open. We do not know whether an algebra
that admits a unique conjugacy class of linear sofic representations
is necessarily amenable.

In Section \ref{sec:Weak} we apply Theorem \ref{thm:main} to the
study of \emph{weak stability}; Analogous to groups, this property
concerns whether linear sofic representations are asymptotically close
to true representations. 
\begin{thm}[Theorem \ref{thm:weak stability} $($\ref{enu:weak2}$)$]
Let $\Gamma$ be a torsion free amenable group and $K$ a field such
that $K\Gamma$ has no zero divisors. Then $K\Gamma$ is weakly stable
if and only if $\Gamma$ is residually finite.
\end{thm}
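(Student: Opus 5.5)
The plan is to prove the two directions separately, using Theorem \ref{thm:main} for the "if" direction and Mal'cev's theorem for the "only if" direction. Throughout, assume $\Gamma$ is finitely generated, as in Theorem \ref{thm:main}. Then $K\Gamma$ is a finitely generated $K$-algebra. It is amenable, since F{\o}lner sets $F\subseteq\Gamma$ give F{\o}lner subspaces $KF$. By hypothesis it has no zero divisors.

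The common ingredient is a bridge between sofic representations of $\Gamma$ and linear sofic representations of $K\Gamma$. Let $\sigma_k:\Gamma\to \mathrm{Sym}(n_k)$ be a sofic representation, and let $\phi_k:K\Gamma\to M_{n_k}(K)$ be its $K$-linear extension through permutation matrices. Then $\phi_k$ is clearly an asymptotic homomorphism. The nontrivial point is that $rk(\phi_k(a))\to 1$ for every nonzero $a\in K\Gamma$, together with whatever stronger condition Section \ref{sec:LSA} imposes. I would derive this from amenability and the absence of zero divisors as follows.
\begin{itemize}
\item On an $\ep$-F{\o}lner tile $F$, the local picture of $\phi_k(a)$ is the map "multiply by $a$" from $KF$ into $K(\mathrm{supp}(a)F)$.
\item This map is injective, because $a$ is not a zero divisor.
\item Its rank is therefore $|F|\geq(1-\ep)|\mathrm{supp}(a)F|$.
\item An Ornstein--Weiss/Elek quasi-tiling of most of $\{1,\dots,n_k\}$ by such tiles then gives normalized rank at least $1-O(\ep)$.
\end{itemize}
This is the step I expect to be the main obstacle, mainly because the tiles overlap and must be patched together. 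If the paper already records that sofic representations of $\Gamma$ induce linear sofic representations of $K\Gamma$ (a Linnell/Elek-type statement), I would simply invoke it.

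For the "if" direction, suppose $\Gamma$ is residually finite. Choose finite quotients $\pi_k:\Gamma\to Q_k$ that are injective on larger and larger balls. The left regular actions of $Q_k$ form a sofic representation of $\Gamma$, and by the bridge the induced maps $\rho_k:K\Gamma\to M_{|Q_k|}(K)$ form a linear sofic representation. These $\rho_k$ are genuine algebra homomorphisms. Now let $(\phi_k)$ be an arbitrary linear sofic representation of $K\Gamma$. By Theorem \ref{thm:main} it is conjugate to a linear sofic representation built from the $\rho_j$. If conjugacy in Section \ref{sec:LSA} allows matching dimensions up to amplification and rank-small corrections, I would use direct sums $\rho_{j}^{\oplus m}\oplus(\text{a small block})$ to hit the dimension $n_k$ exactly. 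Conjugating back gives genuine representations $\psi_k$ with $rk(\phi_k(a)-\psi_k(a))\to0$ for all $a$, which is weak stability.

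For the "only if" direction, suppose $K\Gamma$ is weakly stable. Since $\Gamma$ is amenable it is sofic, and the bridge turns a sofic representation into a linear sofic representation $\phi_k$. Weak stability yields genuine homomorphisms $\psi_k$ with $rk(\phi_k-\psi_k)\to0$ pointwise. The idempotent $e_k=\psi_k(1)$ satisfies $rk(e_k-I)\to0$. Restricting $\psi_k$ to the image of $e_k$ gives a unital homomorphism, hence a group homomorphism $\Gamma\to GL(\mathrm{Im}\,e_k)$, at a cost of a normalized-rank error tending to $0$. Now fix $g\neq1$. Then $rk(\psi_k(g)-\psi_k(1))\approx rk(\phi_k(g-1))\to1$, so for large $k$ the element $g$ is not in the kernel of this group homomorphism. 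Each image is a finitely generated linear group, hence residually finite by Mal'cev's theorem. Thus $g$ survives in some finite quotient of $\Gamma$, and $\Gamma$ is residually finite.
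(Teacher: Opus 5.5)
Your proposal is correct, and its overall structure is the paper's. For the ``if'' direction you go from finite quotients, to a sofic representation by genuine homomorphisms, to a linear sofic representation by genuine representations. You then amplify it to dimension $n_k$, which is exactly Proposition \ref{prop:amplification}, and compare it with an arbitrary linear sofic representation via Theorem \ref{thm:main}. For the ``only if'' direction you use Mal'cev's theorem, as the paper does.

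\textbf{Where you differ.} The one genuinely different ingredient is the bridge from sofic representations of $\Gamma$ to linear sofic representations of $K\Gamma$.
\begin{itemize}
\item The paper obtains it in Remark \ref{rem:sofic}. The F{\o}lner construction of Proposition \ref{prop:exist} with $W=KF$ is a small perturbation of a permutation representation. The Elek--Szab\'o uniqueness theorem for sofic representations of amenable groups then transfers linear soficity to every sofic representation, with dimension matching left implicit.
\item You prove it directly from Ornstein--Weiss/Elek quasi-tilings. This is more self-contained, since Elek--Szab\'o itself rests on these tilings. It also shows exactly where the absence of zero divisors enters.
\end{itemize}

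\textbf{The patching step.} The issue you flag is handled by shrinking the tiles.
\begin{itemize}
\item Let $\tilde T_j\subseteq\sigma_k(F_j)x_j$ be the pairwise disjoint large parts of the $\varepsilon$-disjoint tiles. Choose the centers $x_j$ at points where $\sigma_k$ looks like the Cayley graph on $S^dF_j$, and let $F_j$ satisfy $|S^dF_j|\le(1+\delta)|F_j|$.
\item Put $F'_j=\{f\in F_j : \sigma_k(sf)x_j\in\tilde T_j\text{ for all }s\in S^d\}$. Then $|F_j\setminus F'_j|\le|S^d|(\delta+\varepsilon)|F_j|$.
\item Each nonzero $a\in A_{\le d}$ maps $\mathrm{span}\{e_{\sigma_k(f)x_j}: f\in F'_j\}$ injectively into $\mathrm{span}\{e_y: y\in\tilde T_j\}$, because $a$ is not a zero divisor.
\item These targets are independent for different $j$. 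Hence $rk(\phi_k(a))\ge(1-|S^d|(\delta+\varepsilon))(1-\varepsilon)$, uniformly over nonzero $a\in A_{\le d}$.
\end{itemize}
This uniform bound is precisely the stronger condition in Proposition \ref{prop:Eq_LSA}.

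\textbf{Two minor remarks.} In the ``only if'' direction the bridge is unnecessary, since Proposition \ref{prop:exist} already provides a linear sofic representation of $K\Gamma$. Your handling of possibly non-unital $\psi_k$ through $e_k=\psi_k(1)$, together with $rk(\psi_k(g-1))\to1$ for $g\neq1$, supplies the details behind the paper's one-line ``$\Gamma$ is residually linear''.
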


A similar property is proven for general amenable algebras that have
representations that ``maximally separate'' its elements in the
rank metric (Theorem \ref{thm:weak stability} (\ref{enu:weak1})),
extending beyond group algebras. 

We conclude with an example in \ref{subsec:abels} of an algebra which
is weakly stable but not stable by taking the group algebra of Abels'
group $A_{p}$. This group was previously shown to be weakly stable
\cite{AP_PStability} but not stable \cite{MR3999445} in permutations.

\subsection*{Acknowledgements}

I would like to thank my advisor, Alex Lubotzky, for his guidance
and helpful discussions. This work is part of the author's Ph.D. thesis
at the Weizmann Institute of Science and was supported by the European
Research Council (ERC) under the European Union\textquoteright s Horizon
2020 (N. 882751). 

\section{Preliminaries}

Throughout the paper, we fix a field $K$ and a finitely generated
$K$-algebra $A$ with generating set $1\in S$ and no zero divisors.
We let $S^{r}$ be the set of all monomials of length at most $r$
and denote by $A_{\le r}=span\left(S^{r}\right)$. 

\subsection{\label{subsec:Amenable}Amenable algebras}

Recall that a discrete group $\Gamma$ is amenable if and only if
there exists a F{\o}lner sequence that exhausts the group, meaning a
sequence $F_{1}\sq F_{2}\sq\ldots$, $\bigcup_{n=1}^{\infty}F_{n}=\Gamma$,
and such that for any $g\in\Gamma$, 
\[
\frac{\left|gF_{n}\cup F_{n}\right|}{\left|F_{n}\right|}\conv{n\to\infty}1.
\]
Analogously, a finitely generated $K$-algebra $A$ is amenable\emph{
}if there exists a sequence of finite dimensional linear subspaces
$W_{1}\sq W_{2}\sq\ldots$ $,\bigcup_{n=1}^{\infty}W_{n}=A$, such
that for any finite dimensional subspace $V\le A$,
\[
\frac{dim_{K}VW_{n}}{dim_{K}W_{n}}\conv{n\to\infty}1.
\]

The amenability of finitely generated algebras was introduced and studied
by Gromov \cite{MR1742309} and Elek \cite{MR1981416}. Elek has proved
characterizations for amenability of finitely generated algebras with
no zero divisors analogous to similar characterizations for groups;
Such an algebra $A$ is amenable if and only if it has no paradoxical
decomposition, if and only if there exists a finitely additive invariant
dimension-measure on $A$ (see \cite{MR1981416} for the relevant
definitions). Bartholdi \cite{MR2448055} has proved that for any
field $K$ and group $\Gamma$, the group algebra $K\Gamma$ is amenable
if and only if $\Gamma$ is amenable. 
\begin{example}
The following classes are amenable.
\begin{itemize}
\item Commutative algebras.
\item Algebras of sub-exponential growth; If 
\[
\gamma_{S}\left(n\right)=dim\left(A_{\le r}\right)
\]
 is the growth function, then $\left(\gamma_{S}\left(n\right)\right)^{\nicefrac{1}{n}}\to1$
(this property is independent of $S$). This includes algebras with
polynomial growth, such as universal enveloping algebras of finite
dimensional Lie algebras (due to the PBW theorem).
\item Group algebras of amenable groups. 
\end{itemize}
\end{example}

We will say that a finite dimensional subspace $W\le A$ is \emph{$\left(V,\varepsilon\right)$-invariant}
if 
\[
dim_{K}VW<\left(1+\varepsilon\right)dim_{K}W.
\]
 For an amenable algebra, for any finite dimensional $V\le A$ and
$\varepsilon>0$ there is a finite dimensional $W\le A$ that is $\left(V,\varepsilon\right)$-invariant.

\subsection{\label{subsec:LS}Linear sofic algebras}

Arzhantseva and P\u{a}unescu \cite{AP_LinearSofic} have introduced
the notion of linear sofic groups and linear sofic algebras as a generalization
of sofic groups, by replacing the finite permutation groups $S_{n}$
equipped with the normalized Hamming metric $d_{H}$ with the groups
$GL_{n}\left(K\right)$ equipped with the normalized rank metric $rk\left(A-B\right)=\frac{1}{n}rank\left(A-B\right)$. 

Fix a nonprincipal ultrafilter $\omega$ and a sequence $n_{k}$ of
natural numbers. The direct product $\prod_{k}M_{n_{k}}\left(K\right)$
is an algebra, and there is a well defined rank function 
\[
rk\left(\left(x_{n}\right)\right)\vcentcolon=\lim{n\to\omega}\,rk\left(x_{n}\right).
\]
The set 
\[
\mathcal{N}_{\omega}=\left\{ \left(x_{n}\right)\in\prod_{k}M_{n_{k}}\left(K\right)\st rk\left(\left(x_{n}\right)\right)=0\right\} 
\]
is a two sided ideal (in fact, a maximal ideal). The quotient is denoted
by 
\[
\prod_{k\to\omega}M_{n_{k}}\left(K\right)\vcentcolon=\prod_{k}M_{n_{k}}\left(K\right)/\mathcal{N}_{\omega},
\]
and $rk$ is a well defined function on this quotient, which is also
known as the \emph{metric ultraproduct} (since $rk\left(x-y\right)$
is a metric). For a more detailed treatment, see \cite[Section 2]{AP_LinearSofic}.

A group $\Gamma$ is $K$-linear sofic if it admits an injective homomorphism
into $\prod_{k\to\omega}GL_{n_{k}}\left(K\right)$ (which is the group
of invertible elements in $\prod_{k\to\omega}M_{n_{k}}\left(K\right)$).
Sofic groups are $K$-linear sofic over any field $K$.

Similarly, a $K$-algebra $A$ is linear sofic if it admits an injective
homomorphism into $\prod_{k\to\omega}M_{n_{k}}\left(K\right)$. The
group $\Gamma$ is $K$-linear sofic if and only if the group algebra
$K\Gamma$ is linear sofic\footnote{It was proven in \cite{AP_LinearSofic} for $K=\CC$ but the proof
works for arbitrary fields.}. 

An injective morphism $\left(\phi_{k}\right):A\to\prod_{k\to\omega}M_{n_{k}}\left(K\right)$
 must satisfy the property that for any nonzero $a\in A$, $\lim{k\to\omega}\;rk\left(\phi_{k}\left(a\right)\right)>0$.
If $ab=0$ are zero divisors, then $\lim{k\to\omega}\;rk\left(\phi_{k}\left(a\right)\right)\le\frac{1}{2}$
or $\lim{k\to\omega}\;rk\left(\phi_{k}\left(b\right)\right)\le\frac{1}{2}$,
so the limit cannot be $1$ for all elements of $A$. In Section \ref{sec:LSA}
we define the notion of a linear sofic representation, where any nonzero
$a\in A$ must satisfy $\lim{k\to\omega}\;rk\left(\phi_{k}\left(a\right)\right)=1$,
so that $\left(\phi_{k}\right)$ is ``maximally separating'', similar
to sofic representations of groups. For this reason, we restrict our
attention to algebras without zero divisors. It might be possible
to generalize this notion to arbitrary linear sofic algebras by defining
a suitable rank function $R:A\to\left[0,1\right]$ such that $R\left(a\right)>0$
for $a\neq0$, and demand that $\lim{k\to\omega}\;rk\left(\phi_{k}\left(a\right)\right)=R\left(a\right)$.

We will say that two sequences of morphisms $\phi_{k},\psi_{k}:A\to M_{n_{k}}\left(K\right)$
are \emph{conjugate} if there is a $M\in\prod_{k\to\omega}GL_{n_{k}}\left(K\right)$
such that $\left(\phi_{k}\right)=M\left(\psi_{k}\right)M^{-1}$ in
$\prod_{k\to\omega}M_{n_{k}}\left(K\right)$. Notice that this is
equivalent to the following: For any $\varepsilon>0$ there is a $B\in\omega$
such that for any $k\in B$, there is some $M_{k}\in GL_{n_{k}}\left(K\right)$
such that $rk\left(\phi_{k}\left(s\right)-M_{k}\psi_{k}\left(s\right)M_{k}^{-1}\right)<\varepsilon$,
$s\in S$. It is enough to prove that for any $\varepsilon>0$, such
an $M_{k}$ exists for all but finitely many $k$.

\subsection{\label{subsec:LLD}Locally linearly dependent operators}

Let $U,V$ be vector spaces over a field $K$. A set of linear operators
$\left\{ T_{1},\ldots,T_{d}\right\} \sq Hom\left(U,V\right)$ is said
to be \emph{locally linearly dependent} if for any $v\in U$, the
set $\left\{ T_{1}v,\ldots,T_{d}v\right\} \sq V$ is linearly dependent. 

Amitsur \cite{MR172902} proved that for any locally linearly dependent
set of size $d$, $\left\{ T_{1},\ldots,T_{d}\right\} $, there is
a nontrivial linear combination $S=\sum_{i=1}^{d}\alpha_{i}T_{i}$
such that 
\[
rank\left(S\right)\le{d+1 \choose 2}-1.
\]
The following stronger result was proved by Aupetit for $K=\CC$ \cite[p. 87]{MR1083349}
and extended by Bre\v{s}ar and \v{S}emrl \cite{MR1621729} for arbitrary infinite
fields. Meshulam and \v{S}emrl \cite{MR1897909} then extended this result
to all fields.
\begin{thm}[Bre\v{s}ar, Meshulam, \v{S}emrl]
\label{thm:LLD} Let $U,V$ be vector spaces over a field $K$ and
$\left\{ T_{1},\ldots,T_{d}\right\} \sq Hom\left(U,V\right)$ locally
linearly dependent operators. Then there exist scalars $\alpha_{1},\ldots,\alpha_{d}\in K$,
not all zero, such that $S=\alpha_{1}T_{1}+\ldots+\alpha_{d}T_{d}$
has $rank\left(S\right)\le d-1$.
\end{thm}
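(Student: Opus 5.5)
We propose to prove Theorem \ref{thm:LLD} by induction on $d$, using a ``maximal local rank'' argument followed by a perturbation in a second direction. This works cleanly when $K$ has enough elements; the small finite field case is where we expect the real difficulty.

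\emph{Base case and reduction to maximal local rank.} For $d=1$, local linear dependence says $T_1v=0$ for all $v$, so $T_1=0$ has rank $0$. For $d\ge2$, let
\[
r=\max_{v\in U}\dim span\left\{ T_1v,\ldots,T_dv\right\}.
\]
By hypothesis $r\le d-1$. If $r<d-1$, then any $r+1<d$ of the operators are again locally linearly dependent. The induction hypothesis then gives a nontrivial combination of them, which is also a nontrivial combination of all $d$, of rank at most $r\le d-2$. So we may assume $r=d-1$ and fix $v_0$ realizing it. We perform an invertible change of the $d$-tuple, which is harmless since it preserves the set of nontrivial combinations. After it, $T_1v_0,\ldots,T_{d-1}v_0$ are linearly independent and $T_dv_0=0$. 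We claim that $S=T_d$ itself has rank $\le d-1$.

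\emph{Perturbation argument (infinite, or large, fields).} Fix any $u\in U$ and consider the vector-valued polynomial in $t\in K$
\[
p(t)=T_1(v_0+tu)\wedge\cdots\wedge T_d(v_0+tu)\in\Lambda^dV .
\]
It vanishes identically on $K$ by local linear dependence, and it has degree at most $d$. So if $|K|>d$, all of its coefficients vanish. The constant term is $0$ because $T_dv_0=0$. The coefficient of $t$ then reduces to
\[
T_1v_0\wedge\cdots\wedge T_{d-1}v_0\wedge T_du ,
\]
since every other term of the linear coefficient contains the factor $T_dv_0=0$. Its vanishing, together with the independence of $T_1v_0,\ldots,T_{d-1}v_0$, forces
\[
T_du\in span\left\{ T_1v_0,\ldots,T_{d-1}v_0\right\}.
\]
As $u$ is arbitrary, $\operatorname{Im}T_d$ lies in a $(d-1)$-dimensional space, so $rank(T_d)\le d-1$. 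This settles every field with more than $d$ elements, recovering the Bre\v{s}ar--\v{S}emrl statement.

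\emph{Main obstacle: small finite fields.} When $|K|\le d$, vanishing of $p$ on $K$ no longer forces its coefficients to vanish. Local linear dependence is also not preserved under extending scalars to $\overline{K}$, so we cannot simply pass to a larger field. Our first attempt will be to replace the one-parameter line $v_0+tu$ by the whole affine subspace $v_0+span(u_1,\ldots,u_m)$. On it, the $d\times d$ minors of the matrix with columns $T_i(v_0+\sum_j t_ju_j)$ (in suitable coordinates) are polynomials of degree $\le d$ vanishing on all of $K^m$. After reducing modulo $t_j^{q}-t_j$, where $q=|K|$, we would extract the linear terms in the $t_j$ as above. This works provided $q\ge2$ still separates linear monomials, which it does once the polynomials are reduced. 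The delicate point is justifying that the reduction does not mix the linear coefficient with higher-degree terms; we expect to need a Chevalley--Warning type counting argument here, as in Meshulam--\v{S}emrl. If that fails, we would fall back on a combinatorial induction over the choice of $v_0$. This finite-field step is where we expect essentially all of the difficulty to lie.
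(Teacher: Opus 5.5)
\textbf{Gap: the finite-field case is not proved.} Your reduction to maximal local rank $r=d-1$ is correct, and so is the perturbation argument. Together they prove the theorem whenever $|K|\ge d$, slightly better than your $|K|>d$: the top coefficient $T_1u\wedge\cdots\wedge T_du$ of $p$ also vanishes, by local dependence at $u$, so $p(t)/t$ has degree at most $d-2$ and vanishes on $K^{\times}$. This is essentially the Bre\v{s}ar--\v{S}emrl argument. The paper itself gives no proof; it cites Bre\v{s}ar--\v{S}emrl for infinite fields and Meshulam--\v{S}emrl for arbitrary ones.

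The statement, however, is for every field. The paper applies it in Lemma~\ref{lem:root} over an arbitrary $K$, with the number of operators equal to $\dim W$, which is large. So for finite $K$, the regime $|K|<d$ is exactly the one that gets used. Your proposal leaves that regime open, so as a proof of the stated theorem it has a genuine gap.

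\textbf{Why your planned fix cannot work.} The route you sketch aims at an intermediate claim: that the combination $T_d$ vanishing at a point $v_0$ of maximal local rank has image in $\operatorname{span}\{T_1v_0,\ldots,T_{d-1}v_0\}$, or even rank at most $d-1$. That claim is false over small fields. Take $K=\mathbb{F}_2$, $d=3$, $U=K^4$ with coordinates $(x,y,z,w)$, $V=K^3$ with basis $e_1,e_2,e_3$, and
\[
T_1u=xe_1,\qquad T_2u=ye_2,\qquad T_3u=ze_1+we_2+(x+y)e_3.
\]
If $x=0$ or $y=0$, then $T_1u=0$ or $T_2u=0$. Otherwise $x=y=1$, and $T_3u=zT_1u+wT_2u$. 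So the triple is locally linearly dependent.

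At $v_0=(1,1,0,0)$ you get $T_1v_0=e_1$, $T_2v_0=e_2$, $T_3v_0=0$. This is precisely your normal form, and it is the only relation at $v_0$. Yet $T_3$ is onto $K^3$, so $\operatorname{rank}T_3=3=d$.

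Along $v_0+t(1,0,0,0)$ your polynomial is $p(t)=(t+t^2)\,e_1\wedge e_2\wedge e_3$. It vanishes on $\mathbb{F}_2$, but its linear coefficient does not. This is exactly the identification of $t^a$ with $t$ for $a\equiv 1\pmod{q-1}$ under reduction modulo $t^q-t$. No counting argument (Chevalley--Warning or otherwise) can undo it, because the target statement is false.

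The theorem does hold in this example, but via $T_1$, which has rank $1$ and is not the combination your procedure singles out at $v_0$. A different maximal point such as $(1,0,0,0)$ would single out $T_2$, but your argument gives no way to choose it. So for $|K|<d$ you need a genuinely different, global way of locating the low-rank combination. That is the content of the Meshulam--\v{S}emrl extension the paper relies on.
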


Since we will deal with asymptotic quantities, Amitsur's theorem would
suffice to prove our main theorem. However, we use the stronger
result of Bre\v{s}ar, Meshulam and \v{S}emrl.

\section{\label{sec:LSA}Linear sofic representations}

Recall that for a group $\Gamma$ , a \emph{sofic representation} is
a sequence of maps $\ph_{k}\mapsto S_{n_{k}}$ satisfying
\begin{itemize}
\item $\lim k\;d^{Hamm}\left(\ph_{k}\left(g\right)\ph_{k}\left(h\right),\ph_{k}\left(gh\right)\right)\to0$
for any $g,h\in\Gamma$.
\item $\lim k\;d^{Hamm}\left(\ph_{k}\left(g\right),1\right)\to1$ for any
$g\neq1$.
\end{itemize}
Here, $d^{Hamm}\left(\sigma,\tau\right)=\frac{1}{n}\left|\left\{ j\in\left[n\right]\st\sigma\left(j\right)\neq\tau\left(j\right)\right\} \right|$
is the normalized Hamming metric on the finite symmetric group $S_{n}$.

We define an analog for algebras: A sequence $\phi_{k}:A\to M_{n_{k}}\left(K\right)$
of linear maps is an \emph{asymptotic homomorphism} if 
\[
rk\left(\phi_{k}\left(ab\right)-\phi_{k}\left(a\right)\phi_{k}\left(b\right)\right)\conv{k\to\infty}0
\]
 for any $a,b\in A$.

Fix a nonprincipal ultrafilter $\omega$ on $\NN$. Denote by $K^{\omega}=\prod_{\omega}K$
the ultrapower field, consisting of equivalence classes in $\prod_{n}K$
under the relation $\left(x_{n}\right)_{n}\sim\left(y_{n}\right)_{n}$
if $\left\{ n\st x_{n}=y_{n}\right\} \in\omega$. Note that $K^{\omega}$
embeds in the center of $\prod_{k\to\omega}M_{n_{k}}\left(K\right)$.

Given a sequence of linear maps $\phi_{k}:A\to M_{n_{k}}\left(K\right)$,
there is a natural $K^{\omega}$-linear map $\hat{\phi}:A_{K^{\omega}}\to\prod_{k\to\omega}M_{n_{k}}\left(K\right)$,
where $A_{K^{\omega}}=K^{\omega}\otimes_{K}A$, induced by the bilinear
mapping
\[
\left(\left(x_{k}\right),a\right)\mapsto\left(x_{k}\phi_{k}\left(a\right)\right)_{k}.
\]
We will refer to $\hat{\phi}$ as the mapping induced by $\left(\phi_{k}\right)$. 
\begin{prop}
If $\phi_{k}:A\to M_{n_{k}}\left(K\right)$ is an asymptotic homomorphism,
then the induced map $\hat{\phi}:A_{K^{\omega}}\to\prod_{k\to\omega}M_{n_{k}}\left(K\right)$
is a homomorphism.
\end{prop}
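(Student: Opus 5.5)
The plan is to first check that $\hat{\phi}$ is well defined and $K^{\omega}$-linear, and then verify multiplicativity on elementary tensors, extending by bilinearity.

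\emph{Well-definedness.} The map $K^{\omega}\times A\to\prod_{k\to\omega}M_{n_{k}}\left(K\right)$, $\left(\left(x_{k}\right),a\right)\mapsto\left(x_{k}\phi_{k}\left(a\right)\right)_{k}$, has to be independent of the representative $\left(x_{k}\right)$. If $\left(x_{k}\right)\sim\left(y_{k}\right)$, then $x_{k}=y_{k}$ for an $\omega$-large set of $k$. On that set the difference $\left(x_{k}-y_{k}\right)\phi_{k}\left(a\right)$ vanishes, so its rank has $\omega$-limit $0$ and it lies in $\mathcal{N}_{\omega}$. The map is $K$-bilinear because each $\phi_{k}$ is linear, so it induces a $K$-linear map on $K^{\omega}\otimes_{K}A$. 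This map is $K^{\omega}$-linear, since $K^{\omega}$ acts on the target through the central embedding $\left(x_{k}\right)\mapsto\left(x_{k}I_{n_{k}}\right)$.

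\emph{Multiplicativity.} By bilinearity of both products, it suffices to check $\hat{\phi}\left(\left(x\otimes a\right)\left(y\otimes b\right)\right)=\hat{\phi}\left(x\otimes a\right)\hat{\phi}\left(y\otimes b\right)$ for $x=\left(x_{k}\right)$, $y=\left(y_{k}\right)\in K^{\omega}$ and $a,b\in A$. The left side is $\left(x_{k}y_{k}\phi_{k}\left(ab\right)\right)_{k}$. The right side is $\left(x_{k}y_{k}\phi_{k}\left(a\right)\phi_{k}\left(b\right)\right)_{k}$, because scalars commute with matrices. Their difference is $\left(x_{k}y_{k}\left(\phi_{k}\left(ab\right)-\phi_{k}\left(a\right)\phi_{k}\left(b\right)\right)\right)_{k}$. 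Multiplying by a scalar does not increase rank, so
\[
rk\left(x_{k}y_{k}\left(\phi_{k}\left(ab\right)-\phi_{k}\left(a\right)\phi_{k}\left(b\right)\right)\right)\le rk\left(\phi_{k}\left(ab\right)-\phi_{k}\left(a\right)\phi_{k}\left(b\right)\right).
\]
The right-hand side tends to $0$ as $k\to\infty$ by the asymptotic homomorphism hypothesis. Since $\omega$ is nonprincipal, the $\omega$-limit of the left-hand side is therefore $0$, so the difference lies in $\mathcal{N}_{\omega}$.

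\emph{Remarks.} If unitality is part of the notion of homomorphism, I would note that it does not follow from the asymptotic condition alone. Taking $a=b=1$ only gives that $\phi_{k}\left(1\right)$ is asymptotically idempotent. Full unitality requires $rk\left(\phi_{k}\left(1\right)-I\right)\to0$, which would come from the stronger definition the paper alludes to, and I would not assert it otherwise. The main obstacle is only the bookkeeping of passing from the $K$-bilinear map to the tensor product and the compatibility of the $K^{\omega}$ action. The analytic content is just the rank inequality above.
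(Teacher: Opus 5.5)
Your proof is correct and takes essentially the same approach as the paper: reduce to elementary tensors, then observe that $\left(x_{k}y_{k}\left(\phi_{k}(ab)-\phi_{k}(a)\phi_{k}(b)\right)\right)_{k}$ lies in $\mathcal{N}_{\omega}$. You also spell out well-definedness, $K^{\omega}$-linearity and the scalar rank bound, which the paper leaves implicit, and your remark that unitality is not being claimed here is accurate.
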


\begin{proof}
Let $x,y\in A_{K^{\omega}}$, so that $x,y$ can each be written as
a finite sum of elements of the form $\left(\alpha_{k}\right)_{i}\otimes a_{i}$.
In order for $\hat{\phi}$ to be a homomorphism it is enough to check
multiplicativity on monomials: 
\[
\hat{\phi}\left(\left(\alpha_{k}\right)_{i}\otimes a_{i}\cdot\left(\alpha'_{k}\right)_{i}\otimes a'_{i}\right)=\hat{\phi}\left(\left(\alpha_{k}\alpha'_{k}\right)\otimes a_{i}a'_{i}\right)=\left(\alpha_{k}\alpha'_{k}\phi_{k}\left(a_{i}a'_{i}\right)\right)_{k}
\]
\[
\hat{\phi}\left(\left(\alpha_{k}\right)_{i}\otimes a_{i}\right)\hat{\phi}\left(\left(\alpha'_{k}\right)_{i}\otimes a'_{i}\right)=\left(\alpha_{k}\alpha'_{k}\phi_{k}\left(a_{i}\right)\phi_{k}\left(a'_{i}\right)\right)_{k}.
\]
Thus, their difference is an element of $\mathcal{N}_{\omega}$ and
hence vanishes in $\prod_{k\to\omega}M_{n_{k}}\left(K\right)$.
\end{proof}
\begin{defn}
Let $d\in\NN$. A linear map $\phi:A\to M_{n}\left(K\right)$ is a
\emph{$d$-approximation} of $\left(A,S\right)$ if there is a subspace
$U\le K^{n}$ such that
\begin{itemize}
\item $\phi$ is \emph{$d$-multiplicative }on $U$; For any $x,y\in A_{\le d}$
and $u\in U$, $\phi\left(xy\right)u=\phi\left(x\right)\phi\left(y\right)u$.
\item $dim\left(U\right)\ge\left(1-\frac{1}{d}\right)n$.
\item For any nonzero $a\in A_{\le d}$, $rk\left(\phi\left(a\right)\right)\ge1-\frac{1}{d}$.
\end{itemize}
\end{defn}

\begin{prop}
\label{prop:Eq_LSA}Let $\phi_{k}:A\to M_{n_{k}}\left(K\right)$ be
a sequence of linear maps. The following are equivalent:
\begin{enumerate}
\item \label{enu:Sep-1}$\left(\phi_{k}\right)$ is an asymptotic homomorphism,
and for any increasing subsequence $\psi_{m}=\phi_{k_{m}},$ $rk\left(\hat{\psi}\left(x\right)\right)=1$
for all $0\neq x\in A_{K^{\omega}}$.
\item \label{enu:LSA-1}For any $d\in\NN$, $\phi_{k}$ is a $d$-approximation
of $\left(A,S\right)$ for large enough $k$.
\end{enumerate}
\end{prop}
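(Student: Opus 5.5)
The plan is to prove both implications directly from the definitions. The one structural fact used throughout is that $A_{\le d}$ is finite dimensional; fix a basis $e_1,\ldots,e_N$ of it, consisting of monomials from $S^{d}$. By bilinearity, every multiplicativity and rank condition on $A_{\le d}$ then reduces to finitely many conditions on basis elements or to conditions on coefficient vectors.

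For (\ref{enu:LSA-1})$\Rightarrow$(\ref{enu:Sep-1}), first check the asymptotic homomorphism property. Given $a,b\in A$, choose $d$ with $a,b\in A_{\le d}$. For $k$ large, $\phi_k$ is a $d'$-approximation for every fixed $d'\ge d$. Then $\phi_k(ab)-\phi_k(a)\phi_k(b)$ vanishes on $U$, so its normalized rank is at most $1/d'$. Letting $d'\to\infty$ gives $rk\to0$.

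Next take a subsequence $\psi_m=\phi_{k_m}$ and a nonzero $x\in A_{K^\omega}$.
\begin{enumerate}
\item Since the $e_i$ form a $K^\omega$-basis of $K^\omega\otimes_K A_{\le d_0}$, we can write $x=\sum_i(\alpha_{i,m})_m\otimes e_i$ with $x\in K^\omega\otimes A_{\le d_0}$.
\item Because the coefficient vector is nonzero in $(K^\omega)^N$, for $\omega$-almost every $m$ the element $a_m=\sum_i\alpha_{i,m}e_i\in A_{\le d_0}$ is nonzero.
\item For any $d\ge d_0$ we have $a_m\in A_{\le d}$, so $rk(\psi_m(a_m))\ge1-1/d$ for large $m$.
\item Since $\hat\psi(x)=(\psi_m(a_m))_m$, this gives $rk(\hat\psi(x))\ge1-1/d$ for every $d$, hence $rk(\hat\psi(x))=1$. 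Note that "large enough $k$" passes to subsequences.
\end{enumerate}

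For (\ref{enu:Sep-1})$\Rightarrow$(\ref{enu:LSA-1}), fix $d$. Define
\[
U_k=\bigcap_{i,j}\ker\left(\phi_k(e_ie_j)-\phi_k(e_i)\phi_k(e_j)\right).
\]
By bilinearity, $\phi_k$ is $d$-multiplicative on $U_k$. Its codimension is at most the sum of $N^2$ ranks, each of which is $o(n_k)$ by the asymptotic homomorphism property. So $\dim U_k\ge(1-1/d)n_k$ for large $k$, and the first two conditions hold eventually.

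The main point is the rank condition, which I would prove by contradiction. Suppose there are infinitely many $k_m$ and nonzero $a_m=\sum_i\alpha_{i,m}e_i\in A_{\le d}$ with $rk(\phi_{k_m}(a_m))<1-1/d$. Set $\psi_m=\phi_{k_m}$ and
\[
x=\sum_i(\alpha_{i,m})_m\otimes e_i\in A_{K^\omega}.
\]
For each $m$ some coordinate $\alpha_{i,m}$ is nonzero. Since there are only finitely many indices $i$, a single $i$ has $\alpha_{i,m}\neq0$ for $\omega$-almost all $m$. Hence that coefficient is nonzero in $K^\omega$, and $x\neq0$ by freeness of the $e_i$ over $K^\omega$. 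But $\hat\psi(x)=(\psi_m(a_m))_m$ has $rk\le1-1/d<1$, contradicting (\ref{enu:Sep-1}).

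The only subtle step is this last one: building a nonzero element of $A_{K^\omega}$ out of a sequence of "bad" elements. This is exactly why (\ref{enu:Sep-1}) is phrased for all subsequences and for $K^\omega$-coefficients rather than just for elements of $A$.
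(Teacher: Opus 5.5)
Your proof is correct and follows the paper's argument in both directions. For $d$-multiplicativity you use the intersection of kernels of $\phi_k(ab)-\phi_k(a)\phi_k(b)$ over a finite spanning set, and for the rank condition you argue by contradiction, viewing a bad sequence $x_m\in A_{\le d}$ as a nonzero element of $A_{K^\omega}$; your fixed $K$-basis of $A_{\le d}$ and the freeness argument over $K^\omega$ simply make explicit the paper's brief claim that $\sum_i\alpha_m^i a_i\neq0$ for $\omega$-almost every $m$.
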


If any of the conditions of Proposition \ref{prop:Eq_LSA} are satisfied,
we will say that $\left(\phi_{k}\right)$ is a \emph{linear sofic
representation }of $A$. Notice that this is independent of the generating
set $S$.
\begin{proof}
$ $
\begin{description}
\item [{\ref{enu:LSA-1}$\implies$\ref{enu:Sep-1}}] Let $\varepsilon>0$.
Let $x,y\in A$, and take $d$ large enough so that $x,y\in A_{\le d}$
and $\frac{1}{d}<\varepsilon$. Then for any $d$-approximation $\phi_{k}$,
$\phi_{k}\left(xy\right)-\phi_{k}\left(x\right)\phi_{k}\left(y\right)$
vanishes on a subspace of dimension $\ge\left(1-\frac{1}{d}\right)k_{n}$,
so that $rk\left(\phi_{k}\left(xy\right)-\phi_{k}\left(x\right)\phi_{k}\left(y\right)\right)<\varepsilon$.
Since this happens for any $\varepsilon$ for large enough $k$, $\phi_{k}$
is an asymptotic homomorphism. Let $0\neq x\in A_{K^{\omega}}$ and
$\psi_{m}=\phi_{k_{m}}$ be a subsequence. Then $x$ can be written
as a finite sum $x=\sum_{i}\left(\alpha_{m}^{i}\right)_{m}\otimes a_{i}$,
$\left(\alpha_{m}^{i}\right)_{m}\in K^{\omega}$ and $a_{i}\in A$.
Take $d$ large enough so that all the $a_{i}$ are in $A_{\le d}$,
so now
\[
\hat{\psi}\left(x\right)=\left(\sum_{i}\alpha_{m}^{i}\psi_{m}\left(a_{i}\right)\right)_{m}.
\]
Since for large enough $m$ , $rk\left(\sum_{i}\alpha_{m}^{i}\phi_{m}\left(a_{i}\right)\right)\ge1-\varepsilon$
for nonzero $\alpha_{m}^{i}\phi_{m}\left(a_{i}\right)\in A_{\le d}$,
and moreover $\sum_{i}\alpha_{m}^{i}a_{i}\neq0$ for $\omega$-almost
every $n\in\NN$ (as can be seen by embedding $A_{K^{\omega}}$ in
the ultrapower $A^{\omega})$, we conclude that $rk\left(\hat{\psi}\left(x\right)\right)=1$.
\item [{\ref{enu:Sep-1}$\implies$\ref{enu:LSA-1}}] Let $d\in\NN$. For
any $n$, let 
\begin{align*}
U & =\bigcap_{a,b\in A_{\le d}}ker\left(\phi_{k}\left(ab\right)-\phi_{k}\left(a\right)\phi_{k}\left(b\right)\right)\\
 & =\bigcap_{a,b\in S^{d}}ker\left(\phi_{k}\left(ab\right)-\phi_{k}\left(a\right)\phi_{k}\left(b\right)\right)\le K^{n_{k}}.
\end{align*}
For large enough $k$ we have $rk\left(\phi_{k}\left(ab\right)-\phi_{k}\left(a\right)\phi_{k}\left(b\right)\right)\le\frac{1}{d\left|S^{d}\right|^{2}}$
for any $a,b\in S^{d}$. Thus
\[
dim\left(U\right)\ge n_{k}\left(1-\left|S^{d}\right|^{2}\cdot\frac{1}{d\left|S^{d}\right|^{2}}\right)=\left(1-\frac{1}{d}\right)n_{k},
\]
and $\phi_{k}$ is $d$-multiplicative on $U$. Moreover, for large
enough $k$ we also have $rk\left(\phi_{k}\left(a\right)\right)\ge1-\frac{1}{d}$
for any nonzero $a\in A_{\le d}$; Otherwise, there would be a subsequence
$\psi_{m}=\phi_{k_{m}}$, $x_{m}\in A_{\le d}\backslash\left\{ 0\right\} $
and a constant $\nu>0$ such that $rk\left(\phi_{k_{m}}\left(x_{m}\right)\right)\le1-\nu$.
As $x_{m}$ are all contained in a finite dimensional subspace $A_{\le d}$,
we may view $\left(x_{m}\right)$ as an element of $A_{K^{\omega}}$
-- collecting the coefficients of each basis element in $S^{d}$
for each $m$ gives an element of $K^{\omega}$, and the finite sum
gives $x_{m}$ for each $m$. But $\left(x_{m}\right)$ is mapped
to an element in $\prod_{m\to\omega}M_{n_{k_{m}}}\left(K\right)$
with $rk\le1-\nu$, a contradiction.
\end{description}
\end{proof}
\begin{rem}
    For infinite $K$, the extension to $A_{K^\omega}$  is necessary (for finite $K$  it coincides with $A$); If we only required that any nonzero $a\in A$  satisfy $\lim{k}\;rk\left(\phi_{k}\left(a\right)\right)=1$, then we could, for example, take an enumeration $(\alpha_k)\in K$  of a countably infinite subset and let $\phi_k:K[x] \to M_1(K)=K$ be defined by $\phi_k(x)=\alpha_k$ and extend canonically to a homomorphism. As every polynomial has finitely many roots, every nonzero element $p\in K[x]$  will satisfy $\lim{k}\;rk\left(\phi_{k}\left(p\right)\right)=1$, but it is clearly not even a $1$-approximation of $K[x]$.  Indeed, we may easily find two such maps that are nonconjugate. But the element $(\alpha_k)_k \otimes x -1\otimes 1\in A_{K^\omega}$ will be mapped to $0$.
\end{rem}

Arzhantseva and Paunescu \cite{AP_LinearSofic} have proved that amenable
algebras with no zero divisors are linear sofic. In fact, the construction
is a linear sofic representation:
\begin{prop}
\label{prop:exist}If $A$ is amenable, then it admits a linear sofic
representation.
\end{prop}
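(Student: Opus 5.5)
The plan is the standard F{\o}lner construction: let $A$ act on a finite dimensional F{\o}lner subspace and compress to it. Fix a F{\o}lner sequence $W_{1}\sq W_{2}\sq\ldots$ with $\bigcup_{n}W_{n}=A$. Set $n_{k}=dim_{K}W_{k}$, and for each $k$ choose a linear projection $P_{k}:A\to W_{k}$, i.e.\ an idempotent onto $W_{k}$ that is the identity on $W_{k}$. Identify $W_{k}\cong K^{n_{k}}$ and define
\[
\phi_{k}\left(a\right)=P_{k}\circ L_{a}|_{W_{k}}\in M_{n_{k}}\left(K\right),
\]
where $L_{a}$ is left multiplication by $a$. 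The map $a\mapsto\phi_{k}\left(a\right)$ is linear. I will check condition (\ref{enu:LSA-1}) of Proposition \ref{prop:Eq_LSA}: for each $d$, $\phi_{k}$ is a $d$-approximation for large $k$.

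\emph{Multiplicativity.} Fix $d$ and let $U_{k}=\left\{ w\in W_{k}\st A_{\le2d}w\sq W_{k}\right\} $. For $u\in U_{k}$ and $x,y\in A_{\le d}$ we have $yu\in W_{k}$, so $\phi_{k}\left(y\right)u=yu$. Also $x\left(yu\right)\in W_{k}$ and $xyu\in W_{k}$, so
\[
\phi_{k}\left(x\right)\phi_{k}\left(y\right)u=xyu=\phi_{k}\left(xy\right)u .
\]
It remains to bound $dim\,U_{k}$ from below. For each monomial $s\in S^{2d}$, the map $W_{k}\to A/W_{k}$, $w\mapsto sw+W_{k}$, has kernel of codimension $dim\left(sW_{k}+W_{k}\right)-dim\,W_{k}$. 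Since $1\in S$, this is at most $dim\left(A_{\le2d}W_{k}\right)-dim\,W_{k}=o\left(dim\,W_{k}\right)$ by the F{\o}lner property. Now $U_{k}$ is the intersection of these kernels over the finitely many $s\in S^{2d}$, so $dim\,U_{k}\ge\left(1-\frac{1}{d}\right)n_{k}$ for large $k$.

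\emph{Rank.} This is the main point, and it uses the absence of zero divisors. Let $0\neq a\in A_{\le d}$. Left multiplication by $a$ is injective on $A$, so $L_{a}$ restricted to $U_{k}$ is injective. Moreover $\phi_{k}\left(a\right)u=au$ for $u\in U_{k}$, because $au\in W_{k}$. Hence
\[
rank\,\phi_{k}\left(a\right)\ge dim\,U_{k}\ge\left(1-\tfrac{1}{d}\right)n_{k}.
\]
This bound is uniform in $a$, since $U_{k}$ depends only on $d$. So it holds for every nonzero $a\in A_{\le d}$ simultaneously, which is exactly what the definition of a $d$-approximation requires. (This uniformity is also the reason for checking condition (\ref{enu:LSA-1}) rather than condition (\ref{enu:Sep-1}), which involves $A_{K^{\omega}}$.)

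The only potential obstacle is making sure $U_{k}$ is taken large enough to cover both uses. It must handle products $xy$ with $x,y\in A_{\le d}$, which forces the use of $A_{\le2d}$ in its definition, and it must be independent of $a$ for the rank bound. Both are arranged above. Since $d$ was arbitrary, Proposition \ref{prop:Eq_LSA} shows that $\left(\phi_{k}\right)$ is a linear sofic representation.
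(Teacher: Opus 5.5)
Your proof is correct and is essentially the paper's argument: compress left multiplication to a F{\o}lner subspace via an idempotent, and use $U=\{w\in W \mid S^{2d}w\subseteq W\}$ both for multiplicativity and, via the absence of zero divisors, for the uniform rank bound, with $\dim U$ controlled by intersecting over $S^{2d}$. The only differences are cosmetic. You run along a F{\o}lner sequence with $o(\dim W_k)$ estimates instead of choosing an $\left(A_{\le 2d},\frac{1}{d|S^{2d}|}\right)$-invariant $W$ for each $d$. You also count codimensions through the map $W_k\to A/W_k$ rather than through $\dim(W\cap xW)$.
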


\begin{proof}
It is enough to prove that $A$ has $d$-approximations for arbitrary
$d$. Let $d\in\NN$, let $W\le A$ be an $\left(A_{\le2d},\frac{1}{d\left|S^{2d}\right|}\right)$-invariant
subspace and let $p:A_{\le d}\cdot W\to W\cap A_{\le d}\cdot W$ be
an idempotent. For $a\in A$, denote by $m_{a}:A\to A$ the linear
map $x\mapsto ax$. Notice that the zero divisor assumption implies
that $m_{a}$ is injective for $a\neq0$. For any $a\in A$, define
the map $\phi\left(a\right)\in End_{K}\left(W\right)$ by $\phi\left(a\right)w=p\circ m_{a}.$
Let 
\[
U=\left\{ v\in W\st S^{2d}v\in W\right\} \le W.
\]
Then for any $a,b\in S^{d}$ and $u\in U$, 
\[
\phi\left(a\right)\phi\left(b\right)u=papbu=abu=\phi\left(ab\right)u,
\]
and this extends to the linear span. Thus, $rk\left(\phi\left(a\right)\phi\left(b\right)-\phi\left(ab\right)\right)\le1-\frac{dim\left(U\right)}{dim\left(W\right)}$
for any $a,b\in A_{\le d}$. 

Moreover, for any nonzero $a\in A_{\le d}$,
\[
rk\left(\phi\left(a\right)\right)\ge rk\left(p\circ m_{a}\mid_{U}\right)=\frac{dim\left(U\right)}{dim\left(W\right)}.
\]
We are therefore left with proving that $\frac{dim\left(U\right)}{dim\left(W\right)}\ge1-\frac{1}{d}$:
For any $x\in S^{2d}$,
\begin{align*}
dim\left(m_{x}^{-1}\left(W\right)\cap W\right) & =dim\left(W\cap xW\right)=2dim\left(W\right)-dim\left(W+xW\right)\\
 & \ge2dim\left(W\right)-dim\left(A_{\le2d}W\right)\ge\left(1-\frac{1}{d\left|S^{2d}\right|}\right)dim\left(W\right).
\end{align*}
Hence,
\begin{align*}
dim\left(U\right) & =dim\left(\bigcap_{x\in S^{2d}}m_{x}^{-1}\left(W\right)\right)\ge\left(1-\frac{1}{d}\right)dim\left(W\right)\\
 & \implies\frac{dim\left(U\right)}{dim\left(W\right)}\ge\left(1-\frac{1}{d}\right).
\end{align*}
\end{proof}
\begin{rem}
\label{rem:sofic}Notice that in the case that $A=K\Gamma$ where
$\Gamma$ is an amenable group, we may choose the F{\o}lner set $W$
to be the linear span of a suitably invariant F{\o}lner set of $\Gamma$.
Then this construction, when restricted to $\Gamma$, can be slightly
perturbed to true permutations, and hence will be a sofic representation.
As all sofic representations are conjugate, it follows that the linear
extension of any sofic representation $\ph_{n}:\Gamma\to S_{k_{n}}$
is a linear sofic representation of $K\Gamma$. Theorem \ref{thm:main}
will imply that this is, up to small rank perturbation and conjugation,
the only way to get a linear sofic representation of $K\Gamma$.
\end{rem}

Unlike the case of sofic groups, all of which have sofic representations,
we do not know whether (non amenable) linear sofic algebras with no
zero divisors have a linear sofic representation. The ``amplification
trick'' (cf. \cite[Section 5]{AP_LinearSofic}) is not as well-behaved
as it is for sofic representations. 

\section{linear monotilings}

Given a linear map $\phi:A\to M_{n}\left(K\right)$ and a linear subspace
$W\le A$, we will say that a vector $v\in K^{n}$ is a \emph{$W$-root
vector}\footnote{The terminology is inspired by Elek and Grabowski's ``root vectors''
for ballspaces \cite[Section 4]{ElekGrabowski}} for $\phi$ if the mapping 
\[
W\mapsto\phi\left(W\right)v
\]
is injective and multiplicative.

Theorem \ref{thm:LLD} can be applied in order to find many $W$-root
vectors for a finite dimensional $W\le A$ in a linear sofic representation.
\begin{lem}
\label{lem:root}Let $W\le span\left(S^{d}\right)$ be a linear subspace
and let $\phi:A\to M_{n}\left(K\right)$ be a $d$-approximation.
Let $E\le K^{n}$ be a subspace such that 
\[
dim\left(E\right)\le\left(1-\frac{2}{d}\right)n-dim\left(W\right).
\]
Then there exists a $W$-root vector $u\in K^{n}$ such that $\phi\left(W\right)u\cap E=0$.
\end{lem}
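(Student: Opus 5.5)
Our plan is to restrict to the subspace $U$ from the definition of a $d$-approximation. There multiplicativity is automatic. We then use Theorem \ref{thm:LLD} to get injectivity together with the avoidance of $E$. Let $U\le K^{n}$ be the subspace on which $\phi$ is $d$-multiplicative, so $dim(U)\ge(1-\frac{1}{d})n$. We interpret multiplicativity of $w\mapsto\phi(w)u$ for $W\le span(S^{d})=A_{\le d}$ as $\phi(xy)u=\phi(x)\phi(y)u$ for the relevant products, so every $u\in U$ gives a multiplicative map. If the intended notion involves products landing outside $A_{\le d}$, we would instead shrink $U$ slightly using the $d$-approximation property at a larger parameter. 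It therefore suffices to find $u\in U$ such that the map $W\to K^{n}$, $w\mapsto\phi(w)u$, is injective and its image meets $E$ trivially.

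Compose with the quotient map $q:K^{n}\to K^{n}/E$. Fix a basis $w_{1},\ldots,w_{m}$ of $W$, with $m=dim(W)$, and consider the operators $T_{i}=q\circ\phi(w_{i})\mid_{U}\in Hom(U,K^{n}/E)$. Suppose $u\in U$ makes $\{T_{1}u,\ldots,T_{m}u\}$ linearly independent. Then no nontrivial combination $\phi(\sum\alpha_{i}w_{i})u$ lies in $E$. This gives both $\phi(W)u\cap E=0$ and injectivity of $w\mapsto\phi(w)u$. So we argue by contradiction: assume $T_{1},\ldots,T_{m}$ are locally linearly dependent on $U$.

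By Theorem \ref{thm:LLD} there are scalars $\alpha_{i}$, not all zero, with $rank(\sum\alpha_{i}T_{i})\le m-1$. Put $a=\sum\alpha_{i}w_{i}$. Then $a\neq0$ and $a\in A_{\le d}$, so $rk(\phi(a))\ge1-\frac{1}{d}$. On the other hand, $rank(q\circ\phi(a)\mid_{U})\le m-1$ and $q$ has kernel $E$, so
\[
rank(\phi(a)\mid_{U})\le m-1+dim(E).
\]
Restricting to $U$ loses at most $n-dim(U)\le\frac{n}{d}$ in rank, hence
\[
rank(\phi(a))\le\frac{n}{d}+m-1+dim(E).
\]
Using the hypothesis $dim(E)\le(1-\frac{2}{d})n-m$, this gives $rank(\phi(a))\le(1-\frac{1}{d})n-1<(1-\frac{1}{d})n$. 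This contradicts $rk(\phi(a))\ge1-\frac{1}{d}$, so the $T_{i}$ are not locally linearly dependent, and the required root vector $u$ exists.

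The main obstacle is this rank bookkeeping: losing $\frac{n}{d}$ from restricting to $U$, and $dim(E)$ from the quotient. The budget $\frac{2}{d}$ in the hypothesis exactly matches these losses, with room to spare. The secondary point is confirming that the meaning of ``multiplicative'' in the definition of a root vector is covered by $d$-multiplicativity on $U$ for $W\le A_{\le d}$.
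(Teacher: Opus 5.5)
Your proposal is correct and follows the paper's proof: restrict to the $d$-multiplicative subspace $U$, pass to the quotient by $E$, and apply Theorem \ref{thm:LLD} to the operators $q\circ\phi(w_i)|_U$, with the same rank bookkeeping (the paper states it as a direct lower bound $rank(\pi\circ\phi(x)|_U)\ge\dim W$ rather than arguing by contradiction). Your reading of ``multiplicative'' as $d$-multiplicativity at $u\in U$ is also the one the paper uses implicitly.
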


\begin{proof}
Let $U\le K^{n}$ be a codimension-$\frac{n}{d}$ subspace on which
$\phi$ is $d$-multiplicative. The restriction $\phi\mid_{U}$ satisfies
for any $x\in W$,
\[
rank\left(\phi\left(x\right)\mid_{U}\right)\ge\left(1-\frac{2}{d}\right)n.
\]
Let $\pi:K^{n}\to K^{n}/E$ be the natural projection. Then for any
$x\in W$, 
\[
rank\left(\pi\circ\phi\left(x\right)\mid_{U}\right)\ge rank\left(\phi\left(x\right)\mid_{U}\right)-dim\left(E\right)\ge dim\left(W\right).
\]
Picking a basis $w_{1},\ldots,w_{dim\left(W\right)}$ for $W$, by
Theorem \ref{thm:LLD} there is a vector $u\in U$ such that $\pi\circ\phi_{U}\left(w_{i}\right)u$
are linearly independent - or equivalently, $\phi\left(w_{i}\right)\mid_{U}u$
are linearly independent modulo $E$. Thus, $u$ is a $W$-root vector
for $\phi$, and $\phi_{U}\left(W\right)u\cap E=0$.
\end{proof}
In \cite{MR2258227}, Elek proved an analogous decomposition for sofic
representations using the Ornstein-Weiss quasi-tiling technique, which
uses finitely many tiles in $\Gamma$ in order to cover most of the
vertices of the graph. Weiss \cite{MR1819193} has proved that residually
finite amenable groups are monotileable, i.e. there is a F{\o}lner sequence
$T_{n}$ and sets $C_{n}\sq\Gamma$ such that for each $n$, $\left\{ T_{n}c\st c\in C_{n}\right\} $
forms a partition of $\Gamma$. It remains an open problem whether
all amenable groups are monotileable. The following can be thought
of as a ``linear monotiling'' lemma for linear sofic representations:
\begin{lem}
\label{lem:monotiling}Let $\phi:A\to M_{n}\left(K\right)$ be a $d$-approximation,
and let $W\le A_{\le d}$ be a nontrivial linear subspace. 

Then there exist $W$-root vectors $v_{1},\ldots,v_{\ell}$ such that
the subspaces $\phi\left(W\right)v_{i}$ are independent, and 
\[
dim\left(\bigoplus_{i=1}^{\ell}\phi\left(W\right)v_{i}\right)\ge\left(1-\frac{2}{d}\right)n-dim\left(W\right).
\]
\end{lem}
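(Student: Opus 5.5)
A greedy construction with Lemma \ref{lem:root} applied repeatedly should suffice. I will build the root vectors one at a time, keeping track of the direct sum $E_{j}=\bigoplus_{i=1}^{j}\phi\left(W\right)v_{i}$ of the subspaces already covered.

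Start with $E_{0}=0$. Suppose $v_{1},\ldots,v_{j}$ have been chosen so that each $v_{i}$ is a $W$-root vector and the subspaces $\phi\left(W\right)v_{1},\ldots,\phi\left(W\right)v_{j}$ are independent. If
\[
dim\left(E_{j}\right)\le\left(1-\frac{2}{d}\right)n-dim\left(W\right),
\]
then Lemma \ref{lem:root}, applied with $E=E_{j}$, gives a $W$-root vector $v_{j+1}$ with $\phi\left(W\right)v_{j+1}\cap E_{j}=0$. So the new summand meets the existing sum trivially. By induction the family $\phi\left(W\right)v_{1},\ldots,\phi\left(W\right)v_{j+1}$ is independent, and $E_{j+1}=E_{j}\oplus\phi\left(W\right)v_{j+1}$.

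Since $v_{j+1}$ is a $W$-root vector, the map $w\mapsto\phi\left(w\right)v_{j+1}$ is injective on $W$. Hence $dim\left(E_{j+1}\right)=dim\left(E_{j}\right)+dim\left(W\right)$, and the dimension strictly increases at each step because $W\neq0$. The dimension is bounded by $n$, so the process must stop, say after $\ell$ steps. It stops only when the hypothesis of Lemma \ref{lem:root} fails, that is, when
\[
dim\left(E_{\ell}\right)>\left(1-\frac{2}{d}\right)n-dim\left(W\right).
\]
This is exactly the required bound.

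The one point that needs attention is that Lemma \ref{lem:root} is stated for $W\le span\left(S^{d}\right)=A_{\le d}$, and this matches the hypothesis here. The lemma's guarantee that $\phi\left(W\right)v\cap E=0$ is precisely what makes the greedy sum direct, so I expect no real obstacle beyond this routine bookkeeping. There is one degenerate case: if the right-hand side of the bound is negative, then $\ell=0$ works trivially.
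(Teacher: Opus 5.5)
Your proposal is correct and is essentially the paper's proof. Both run a greedy induction that applies Lemma \ref{lem:root} with $E$ equal to the direct sum built so far, and stop once that sum reaches dimension $\left(1-\frac{2}{d}\right)n-\dim W$. Your explicit termination argument, that each step adds exactly $\dim W\ge1$ dimensions, is slightly cleaner bookkeeping of the same idea.
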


\begin{proof}
We find by induction $W$-root vectors $v_{1},\ldots,v_{r}$ such
that $\left\{ \phi\left(W\right)v_{i}\right\} _{i=1}^{r}$ are independent,
$r\le\ell=\frac{\left(1-\frac{2}{d}\right)n}{dim\left(W\right)}-1$.
For $r=1$, if $dim\left(W\right)\ge\left(1-\frac{2}{d}\right)n$
then the statement is trivial with respect to an empty collection
of vectors. Otherwise, by Lemma \ref{lem:root} there exists a $W$-root
vector $v_{1}$.

Assume now that we have found $W$-root vectors $v_{1},\ldots,v_{r}$
such that $\phi\left(W\right)v_{i}$ are independent, and denote $E=\bigoplus_{i=1}^{r}\phi\left(W\right)v_{i}$.
If $dim\left(E\right)\ge\left(1-\frac{2}{d}\right)n-dim\left(W\right)$
then we are done by setting $\ell=r$. Otherwise, by Lemma \ref{lem:root}
we can find a $W$-root vector $v_{r+1}\in K^{n}$ such that $\phi\left(W\right)v_{r+1}\cap E=0$.
Thus, $\left\{ \phi\left(W\right)v_{i}\right\} _{i=1}^{r+1}$ are
independent.
\end{proof}
If we take $\phi_{n}$ to be the linear sofic representation as in
Proposition \ref{prop:exist}, then Lemma \ref{lem:monotiling} implies
that if $V$ is any subspace, any subspace that is sufficiently invariant
can be $\varepsilon$-tiled by $V$. We record this fact which might
be of independent interest.
\begin{cor}
\label{cor:tiling}Let $d\in\NN$, $\varepsilon>0$ and $V\le A_{\le d}$
be a nontrivial finite dimensional subspace, and let $W\le A$ be
a finite dimensional $\left(A_{\le2d},\frac{\varepsilon}{3\left|S^{2d}\right|}\right)$-invariant
subspace. Then there are vectors $v_{1}\ldots,v_{\ell}$ such that
the subspaces $\left\{ V\cdot v_{i}\right\} _{i=1}^{\ell}$ are independent
and 
\[
\bigoplus_{i=1}^{\ell}V\cdot v_{i}\le W
\]
has codimension at most $\varepsilon dim\left(W\right)$.
\end{cor}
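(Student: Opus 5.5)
We apply Lemma \ref{lem:monotiling} to the concrete approximation constructed in the proof of Proposition \ref{prop:exist}, with $W$ in the role of the F{\o}lner subspace. Let $p:A_{\le2d}W\to W$ be an idempotent onto $W\cap A_{\le2d}W=W$ (here $W\sq A_{\le2d}W$ since $1\in S$). Define $\phi(a)w=p(aw)$ for $w\in W$, so $\phi:A\to End_K(W)\cong M_n(K)$ with $n=dim(W)$. Set
\[
U=\{u\in W\st S^{2d}u\subseteq W\}.
\]
On $U$ we have $\phi(a)\phi(b)u=abu=\phi(ab)u$ for $a,b\in A_{\le d}$. Moreover, for $a\in A_{\le2d}$ and $u\in U$, $\phi(a)u=au$ holds literally, with no projection involved.

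Next we bound the codimension of $U$ as in that proof. For each $x\in S^{2d}$,
\[
dim(W\cap x^{-1}W)\ge\left(1-\delta\right)dim(W),\qquad \delta=\frac{\varepsilon}{3\left|S^{2d}\right|}.
\]
Intersecting over $x\in S^{2d}$ gives $dim(U)\ge(1-\varepsilon/3)n$. Since $m_a$ is injective for $a\ne0$, we get $rk(\phi(a))\ge1-\varepsilon/3$ for every nonzero $a\in A_{\le d}$.

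This means $\phi$ satisfies the definition of a $d'$-approximation with $d'$ roughly $3/\varepsilon$, but only for elements of degree $\le d$. So one either also takes $d'\le d$, or, more cleanly, reruns the proofs of Lemmas \ref{lem:root} and \ref{lem:monotiling} directly with the parameter $\varepsilon/3$ in place of $1/d$. In that rerun, the root vectors $u$ produced by Lemma \ref{lem:root} are taken inside $U$, where $\phi$ is multiplicative and $\phi(x)u=xu$.

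Lemma \ref{lem:monotiling} applied to $V$ then yields vectors $v_1,\ldots,v_\ell\in U\sq W$. For each $i$ we have $\phi(V)v_i=V\cdot v_i\le W$, the spaces $V\cdot v_i$ are independent, and
\[
dim\Bigl(\bigoplus_i V v_i\Bigr)\ge\left(1-\tfrac{2\varepsilon}{3}\right)n-dim(V).
\]

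The codimension in $W$ is therefore at most $\tfrac{2\varepsilon}{3}dim(W)+dim(V)$. It remains to absorb the $dim(V)$ term. We have $dim(V)\le|S^{d}|$. The key point is that $W$ being $(A_{\le2d},\delta)$-invariant forces $dim(W)$ to be large. Suppose not; in an infinite-dimensional algebra without zero divisors, a small $W$ cannot be nearly invariant. Indeed, $dim(A_{\le2d}W)\ge dim(A_{\le 2d})$ because right multiplication by a nonzero $w$ is injective, and $dim(A_{\le 2d})>|S^{2d}|/$something is not automatic.

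I expect this absorption of the $-dim(V)$ loss to be the main obstacle. If it cannot be handled this way, I would instead tighten the monotiling. In the last step of the induction, when $dim(E)$ lies between $(1-\tfrac{2\varepsilon}{3})n-dim(V)$ and the target, we can still add another tile as long as
\[
dim(E)\le\left(1-\tfrac{2\varepsilon}{3}\right)n-dim(V).
\]
Alternatively, we use that $dim(W)\ge dim(V)\cdot 3/\varepsilon$ whenever $W$ is sufficiently invariant and $A$ is infinite-dimensional. Since the Corollary's invariance constant has room, and $\tfrac{2\varepsilon}{3}+\tfrac{\varepsilon}{3}=\varepsilon$, I would aim for the bound $dim(V)\le\tfrac{\varepsilon}{3}dim(W)$. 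The finite-dimensional case, where $A$ is a division algebra, is handled separately by taking $W=A$ and tiling by $V$ trivially.
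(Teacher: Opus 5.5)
You follow the route the paper indicates: run Lemma \ref{lem:monotiling} on the approximation built in Proposition \ref{prop:exist}, with the parameter $\varepsilon/3$ in place of $1/d$. Everything up to the bound $\operatorname{codim}_W\bigl(\bigoplus_i Vv_i\bigr)\le\frac{2\varepsilon}{3}\dim W+\dim V$ is fine.

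\textbf{The gap.} The proof stops at the one step that needs an argument, namely showing $\dim V\le\frac{\varepsilon}{3}\dim W$. Your attempted justification, $\dim(A_{\le 2d}W)\ge\dim A_{\le 2d}$, gives nothing useful. Tightening the monotiling cannot help either: a leftover of dimension below $\dim V$ can never hold another tile, so the additive $\dim V$ loss is intrinsic and must be paid for by $\dim W$ being large.

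The missing idea is an integrality argument.
\begin{enumerate}
\item Since $1\in S$, we have $W\subseteq A_{\le 2d}W$.
\item If equality held, then $SW\subseteq W$, so $W$ would be a nonzero finite dimensional left ideal. For $0\ne w\in W$, the map $a\mapsto aw$ is injective (no zero divisors), which would give $\dim A\le\dim W<\infty$.
\item Hence for infinite dimensional $A$, $\dim(A_{\le 2d}W)\ge\dim W+1$. Invariance then gives $\dim W+1<\bigl(1+\frac{\varepsilon}{3|S^{2d}|}\bigr)\dim W$, i.e.\ $\dim W>3|S^{2d}|/\varepsilon$.
\item Since $\dim V\le\dim A_{\le d}\le|S^{d}|\le|S^{2d}|$, this yields $\dim V<\frac{\varepsilon}{3}\dim W$.
\end{enumerate}
This is exactly the bound you were aiming for, and it explains the constant $3|S^{2d}|$ in the statement.

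\textbf{The finite dimensional case.} Your last sentence is wrong: there the corollary is actually false, so it cannot be handled trivially. For small $\varepsilon$, the same count shows that the only admissible $W$ is $A$ itself, since a nonzero proper left ideal does not exist in a division algebra. An exact tiling of $A$ by translates $Vv_i$ then requires $\dim V\mid\dim A$. For a concrete counterexample, take:
\begin{itemize}
\item $A$ a cubic field extension of $K$ generated by $\alpha$, with $S=\{1,\alpha\}$ and $d=1$;
\item $V=A_{\le1}$, which is $2$-dimensional;
\item $\varepsilon<1/3$.
\end{itemize}
Then only $W=A$ is invariant, at most one $2$-dimensional tile fits, and the codimension is $1=\frac13\dim W>\varepsilon\dim W$. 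The statement must be read for infinite dimensional $A$. That is the setting in which the paper uses it: the bounded $n_k$ case of Theorem \ref{thm:main} is treated separately.
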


\begin{rem}
\label{rem:efficient}Lemma \ref{lem:monotiling} and Corollary \ref{cor:tiling}
imply that the linear version of tiling is much more flexible than
the classical (quasi)tilings. In particular, every finite dimensional
subspace will tile any linear sofic representation (up to some small
codimension), and therefore every F\{\o\}lner sequence will consist of
monotiles. This is not the case for tilings of groups, where monotiles
are a special case of a F{\o}lner sequence, and their existence is not
even known for nonresidually finite groups (it is known if we allow
finitely many tiles; See \cite{MR3905135}). 

As an example, if we take a disk shaped $D\sq\ZZ^{2}$ and a square
$\Sigma\sq\ZZ^{2}$ which is sufficiently $D$-invariant, we cannot
$\varepsilon$-tile $\Sigma$ by $D$ for arbitrary $\varepsilon$
(a circle packing problem); But we can linearly $\varepsilon$-tile
$span_{K}\left(\Sigma\right)$ by $span_{K}\left(D\right)$.
\end{rem}

\section{\label{sec:main}Proof of the main theorem}

For infinite algebras, a linear sofic representation $\phi_{k}:A\to M_{n_{k}}\left(K\right)$
must have $n_{k}\to\infty$;
\begin{prop}
\label{prop:FD}Suppose $\phi_{k}:A\to M_{n_{k}}\left(K\right)$ is
a linear sofic representation such that $n_{k}$ is bounded. Then $A$
is finite dimensional.
\end{prop}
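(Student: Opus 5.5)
We argue by contradiction through the rank condition of Proposition \ref{prop:Eq_LSA}. Suppose $n_{k}\le N$ for all $k$, and suppose $A$ is infinite dimensional. Then we may fix $d$ with $d>N$ and also $dim\left(A_{\le d}\right)>N^{2}$; the second choice is possible because $A_{\le d}$ exhausts $A$ as $d$ grows. By condition (\ref{enu:LSA-1}), $\phi_{k}$ is a $d$-approximation for all large $k$. In particular, every nonzero $a\in A_{\le d}$ satisfies
\[
rk\left(\phi_{k}\left(a\right)\right)\ge1-\frac{1}{d}>1-\frac{1}{N}\ge1-\frac{1}{n_{k}}.
\]
Normalized ranks take values in $\frac{1}{n_{k}}\ZZ$, so this forces $rank\left(\phi_{k}\left(a\right)\right)=n_{k}$ for every nonzero $a\in A_{\le d}$, and in particular $\phi_{k}\left(a\right)\neq0$.

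This means the restriction $\phi_{k}\mid_{A_{\le d}}:A_{\le d}\to M_{n_{k}}\left(K\right)$ is an injective linear map. Hence
\[
dim\left(A_{\le d}\right)\le n_{k}^{2}\le N^{2},
\]
which contradicts the choice of $d$. The conclusion is that $dim\left(A_{\le d}\right)\le N^{2}$ for every $d$. Since $A=\bigcup_{d}A_{\le d}$ is an increasing union of subspaces whose dimensions are uniformly bounded, the union stabilizes, and $dim\left(A\right)\le N^{2}$.

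The only delicate point is the discreteness step, which converts the asymptotic bound $rk\ge1-\frac{1}{d}$ into exact invertibility, and hence injectivity, once $d$ exceeds $\sup_{k}n_{k}$. The multiplicativity part of the $d$-approximation is never needed. If one prefers, the injectivity alone already suffices: it needs only $rk\left(\phi_{k}\left(a\right)\right)>0$, which holds for every $d\ge2$.
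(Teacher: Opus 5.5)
Your argument is correct, but its counting step differs from the paper's. The paper also uses $d>N$ and the discreteness of ranks to make every nonzero $\phi_k(a)$, $a\in A_{\le N+1}$, invertible. It then fixes a nonzero vector $v$ and observes that $a\mapsto\phi_k(a)v$ is injective, which gives $\dim A_{\le N+1}\le n_k\le N$. You instead use injectivity of $\phi_k$ itself into $M_{n_k}(K)$, and so get only $\dim A_{\le d}\le N^2$.

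As you point out, your version needs nothing beyond $\phi_k(a)\neq 0$, so the invertibility step is dispensable for you. In the paper it is essential: evaluation at $v$ is injective only because no nonzero operator in $\phi_k(A_{\le N+1})$ is singular. In exchange, the paper gets the sharper bound $\dim A\le N$, in the same ``evaluate at one vector'' spirit as the root vectors of Lemma \ref{lem:root}.

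Your route is also slightly more explicit at the end. The paper bounds only the single space $A_{\le N+1}$ and leaves a stabilization step implicit: the chain $A_{\le 1}\subseteq\cdots\subseteq A_{\le N+1}$ of dimension at most $N$ must have $A_{\le r}=A_{\le r+1}$ for some $r$, and since $1\in S$ this forces $A=A_{\le r}$. You instead apply the bound to every $d$, taking $k$ depending on $d$, and conclude directly from the increasing union.
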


\begin{proof}
Let $n_{k}\le N$ be a uniform bound, and let $k$ be large enough
so that $\phi_{k}$ is an $N+1$-approximation. Then $\left\{ \phi\left(A_{\le N+1}\right)\right\} $
is a subspace of operators, in which every nonzero element has rank
$\ge\left(1-\frac{1}{N+1}\right)n_{k}$. But $n_{k}<N+1$ so that
each operator has full rank. Let $v\in K^{n_{k}}$ be a nonzero vector,
then the linear map $\phi\left(A_{\le N+1}\right)\to K^{n_{k}}$,$x\mapsto xv$
must be injective since a nonzero element in the kernel will correspond
to a singular matrix. But this implies that $dim\left(A_{\le N+1}\right)\le n_{k}\le N$
which can only happen if $A$ is finite dimensional.
\end{proof}
We can now prove the main theorem.
\begin{proof}[proof of Theorem \ref{thm:main}]
We have established the existence of a linear sofic representation
in Proposition \ref{prop:exist}.

Let $\phi_{k},\psi_{k}:A\to M_{n_{k}}\left(K\right)$ be linear sofic
representations. In order for them to be conjugate, as discussed in
Section \ref{subsec:LS} we need to prove that for every $\varepsilon>0$
and for large enough $k$, we have an invertible $M_{k}\in GL_{n_{k}}\left(K\right)$
such that for any $s\in S$,
\[
rk\left(M_{k}\phi_{k}\left(s\right)M_{k}^{-1}-\psi_{k}\left(s\right)\right)<\varepsilon.
\]
Assume first that $n_{k}$ is bounded. By Proposition \ref{prop:FD}
it must be finite dimensional, and combined with the no zero divisor
assumption it must be a division algebra and hence simple. By Artin-Wedderburn
it has a unique irreducible representation, namely $A$ viewed as
a left module. For a $d$-approximation, $d>max\left(n_{k}\right),dim\left(A\right)$,
it must be an actual representation and every nonzero element $a\in A$
must be mapped to a full rank matrix. Thus, if $\phi_{k},\psi_{k}$
are $d$-approximations, they must be the direct sum of copies of
$A$, and therefore they are truly conjugate.

Assume now that $n_{k}\conv{k\to\omega}\infty$. Let $W\le A$ be
a finite dimensional $\left(S,\frac{\varepsilon}{4}\right)$-invariant
subspace, and let $d$ be such that $W\sq A_{\le d}$ and $\frac{1}{d}<\varepsilon$.
Take a $k$ large enough so that $\phi_{k},\psi_{k}$ are $4d$-approximations.
By Lemma \ref{lem:monotiling}, there are $W$-root vectors $v_{1},\ldots,v_{\ell}\in K^{n_{k}}$
for $\phi_{k}$ and $W$-root vectors $u_{1},\ldots,u_{\ell}\in K^{n_{k}}$
for $\psi_{k}$ such that $\phi_{k}\left(W\right)v_{i}$ are independent,
$\psi_{k}\left(W\right)u_{i}$ are independent, and 
\[
dim\left(\bigoplus_{i=1}^{\ell}\phi_{k}\left(W\right)v_{i}\right)=dim\left(\bigoplus_{i=1}^{\ell}\psi_{k}\left(W\right)u_{i}\right)\ge\left(1-\frac{1}{2d}\right)n_{k}-dim\left(W\right).
\]
Notice that $\ell$ is the same for $\left(\phi_{k}\right)$ and $\left(\psi_{k}\right)$
since in Lemma \ref{lem:monotiling} we only assume that the map is
a $d$-approximation.

Take $k$ large enough so that $\frac{1}{4d}n_{k}\ge dim\left(W\right)$.
Then 
\[
dim\left(\bigoplus_{i=1}^{\ell}\phi_{k}\left(W\right)v_{i}\right)=dim\left(\bigoplus_{i=1}^{\ell}\psi_{k}\left(W\right)u_{i}\right)\ge\left(1-\frac{3}{4d}\right)n_{k}.
\]
Let $w_{1},\ldots,w_{m}$ be a basis for $W$ and let $M_{n_{k}}\in GL_{n_{k}}\left(K\right)$
be any invertible map satisfying
\[
M_{n_{k}}\phi_{k}\left(w_{i}\right)v_{j}=\psi_{k}\left(w_{i}\right)u_{j},\;i\in\left[1,m\right],j\in\left[1,\ell\right].
\]
Notice that these are linearly independent vectors so that such an
$M_{n_{k}}$ exists. Then for any $s\in S$ and any $w\in m_{s}^{-1}\left(W\right)\cap W$,
\begin{align*}
 & M_{n_{k}}\phi_{k}\left(s\right)M_{n_{k}}^{-1}\left(\psi_{k}\left(w\right)u_{j}\right)=M_{n_{k}}\phi_{k}\left(s\right)M_{n_{k}}^{-1}\left(\psi_{k}\left(w\right)u_{j}\right)\\
= & M_{n_{k}}\phi_{k}\left(s\right)\phi_{k}\left(w\right)v_{j}=M_{n_{k}}\phi_{k}\left(sw\right)v_{j}=\psi_{k}\left(sw\right)u_{j}\\
= & \psi_{k}\left(s\right)\psi_{k}\left(w\right)u_{j}.
\end{align*}
Observe that 
\[
dim\left(m_{s}^{-1}\left(W\right)\cap W\right)=dim\left(W\cap sW\right)\ge\left(1-\frac{\varepsilon}{4}\right)dim\left(W\right).
\]
Since $M_{n_{k}}\phi_{k}\left(s\right)M_{n_{k}}^{-1}$ and $\psi_{k}\left(s\right)$
agree on the subspace $\bigoplus_{i=1}^{\ell}\psi_{k}\left(m_{s}^{-1}\left(W\right)\cap W\right)u_{i}$,
it follows that 
\[
rk\left(M_{n_{k}}\phi_{k}\left(s\right)M_{n_{k}}^{-1}-\psi_{k}\left(s\right)\right)\le1-\frac{dim\left(m_{s}^{-1}\left(W\right)\cap W\right)\cdot\ell}{n_{k}}
\]
\[
\le1-\frac{\left(1-\frac{\varepsilon}{4}\right)dim\left(W\right)\cdot\ell}{n_{k}}\le1-\left(1-\frac{\varepsilon}{4}\right)\left(1-\frac{3}{4d}\right)<\varepsilon.
\]
This proves that $\left(\phi_{k}\right)$ and $\left(\psi_{k}\right)$
are conjugate.
\end{proof}
\begin{rem}
The above proof implies a notion of \emph{hyperfiniteness} for linear
sofic representations; Recall that by a definition of Elek \cite{MR2455943},
sequence of graphs $\left\{ G_{n}\right\} _{n=1}^{\infty}$ is \emph{hyperfinite}
if for any $\varepsilon>0$ there is a $k_{\varepsilon}\in\NN$ such
that for any $n$ we can erase $\varepsilon\left|E\left(G_{n}\right)\right|$
edges of $G_{n}$ to obtain a graph $G'_{n}$ whose components are
of size at most $k_{\varepsilon}$. We can analogously define a sequence
$\phi_{k}:A\to M_{n_{k}}\left(K\right)$ to be \emph{hyperfinite}
if for any $\varepsilon>0$ there is a $d_{\varepsilon}\in\NN$ such
that for any $k$ there is an idempotent $p_{k}\in M_{n_{k}}\left(K\right)$
of codimension $\le\varepsilon n_{k}$ such that $\left(p_{k}\phi_{k}p_{k}\right)$
splits as a direct sum acting on subspaces of dimension at most $d_{\varepsilon}$.
We have implicitly proved above that every linear sofic representation
of an amenable algebra is hyperfinite. We do not know whether a linear
sofic representation of a nonamenable algebra (if it exists) can be
hyperfinite.
\end{rem}

\section{\label{sec:Weak}Weak stability}

Arzhantseva and Paunescu have introduced the notion of \emph{weak
stability} in permutations. A group $\Gamma$ is weakly stable in
permutations if for every sofic representation $\left(\ph_{n}\right)$
of $\Gamma$, there are true homomorphisms $\left(\psi_{n}\right)$
that are asymptotically close to $\left(\ph_{n}\right)$ (cf \cite[Section 7]{AP_PStability}).
This is a weaker notion than \emph{stability}, which demands that
every \emph{asymptotic homomorphism} of $\Gamma$ be asymptotically
close to homomorphisms. We refer to \cite{MR4929407}
for a thorough treatment of stability of algebras in the rank metric.

We will use here the convention that a representation is a linear,
multiplicative map that is not necessarily unital. 
\begin{rem}
Allowing non unital representations lets us add some zero representations
in order to reach a desired dimension. If we required the maps to
be unital, we could still prove a similar result, but only if we allowed
slight increases in the target dimension $n_{k}$. In the study of
stability, allowing an increase in dimension is commonly referred
to as \emph{flexible} (weak) stability, and was in fact used as the
definition of stability in the rank metric in \cite{ElekGrabowski}
and \cite{MR4929407}. In any case, in the case of a group
algebra we can add representations induced by the trivial representation
of the group, keeping the unitality of the map.
\end{rem}

We define the notion of weak stability of $K$-algebras analogously
to weak stability in permutations. 
\begin{defn}
The algebra $A$ is \emph{weakly stable} if for every $\varepsilon>0$
there is a $d\in\NN$ such that for any $n$ and any $d$-approximation
$\phi:A\to M_{n}\left(K\right)$, there exists a true representation
$\psi:A\to M_{n}\left(K\right)$ such that $rk\left(\phi\left(s\right)-\psi\left(s\right)\right)<\varepsilon$
for any $s\in S$.
\end{defn}

As is standard in the study of stability, this property is equivalent
to the following lifting problem: For every linear sofic representation
$\phi_{k}:A\to M_{n_{k}}\left(K\right)$, the (injective) homomorphism
$\left(\phi_{k}\right):A\to\prod_{k\to\omega}M_{n_{k}}\left(K\right)$
lifts to a homomorphism to the direct product $\prod_{k}M_{n_{k}}\left(K\right)$. 

It follows from Theorem \ref{thm:main} that for $A$ amenable, similarly
to amenable groups, it is weakly stable if and only if it has a sequence
of \emph{true} representations $\psi_{n}:A\to M_{n_{k}}\left(K\right)$
which maximally separate its elements. We first need the following
proposition:
\begin{prop}
\label{prop:amplification}Suppose $\psi_{j}:A\to M_{m_{j}}\left(K\right)$
is a sequence of representations such that $\left(\psi_{j}\right)$
is a linear sofic representation. Then for any sequence $n_{k}\to\infty$
there is a sequence $\rho_{k}:A\to M_{n_{k}}\left(K\right)$ of representations
such that $\left(\rho_{k}\right)$ is a linear sofic representation.
\end{prop}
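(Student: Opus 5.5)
The plan is to build $\rho_k$ as a block-diagonal direct sum of copies of a single $\psi_j$ together with a zero block. Non-unital representations are allowed, so the zero block is a legitimate representation summand. The first step is a simple rank observation. If $\rho=\psi_j^{\oplus q}\oplus 0_r$ with $n=qm_j+r$, then $\rho$ is a true representation and, for every $a\in A$,
\[
rk\left(\rho\left(a\right)\right)=\frac{q\,m_j\,rk\left(\psi_j\left(a\right)\right)}{n}\ge rk\left(\psi_j\left(a\right)\right)-\frac{r}{n}.
\]
Choosing $q=\lfloor n/m_j\rfloor$ gives $r<m_j$, so this error is at most $m_j/n$. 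Since $\rho_k$ is exactly multiplicative, the multiplicativity condition of a $d$-approximation holds with $U=K^{n_k}$. Only the rank condition needs care.

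Next I fix $d$ and use Proposition \ref{prop:Eq_LSA}(\ref{enu:LSA-1}) to pick $j(d)$ such that $\psi_{j}$ is a $2d$-approximation. In particular, every nonzero $a\in A_{\le d}\subseteq A_{\le 2d}$ satisfies $rk(\psi_{j(d)}(a))\ge 1-\frac{1}{2d}$. If $n\ge 2d\,m_{j(d)}$, the displayed bound then gives $rk(\rho(a))\ge 1-\frac1d$ for every nonzero $a\in A_{\le d}$. So for such $n$, the map $\psi_{j(d)}^{\oplus\lfloor n/m_{j(d)}\rfloor}\oplus 0$ is a $d$-approximation. The important point is that this holds uniformly over all nonzero $a\in A_{\le d}$, not just for each $a$ separately.

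The final step is a diagonal choice of $d$ as a function of $k$. Since $n_k\to\infty$, I let $d(k)$ be the largest $d\le k$ with $n_k\ge 2d\,m_{j(d)}$, taking $j(d)$ increasing in $d$ without loss of generality. Then $d(k)\to\infty$. I set $\rho_k=\psi_{j(d(k))}^{\oplus q_k}\oplus 0$. For small $k$ where no such $d$ exists, I take $\rho_k=0$, which does not affect the asymptotics.

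To conclude, fix any $d$. For $k$ large enough that $d(k)\ge d$, $\rho_k$ is a $d(k)$-approximation. A $d'$-approximation is automatically a $d$-approximation for $d\le d'$, because $A_{\le d}\subseteq A_{\le d'}$ and $1-\frac1{d'}\ge 1-\frac1d$. Hence criterion (\ref{enu:LSA-1}) of Proposition \ref{prop:Eq_LSA} holds, and $(\rho_k)$ is a linear sofic representation. The only potential obstacle is this bookkeeping in the diagonal argument: $d(k)$ must be well defined and tend to infinity, which follows from $n_k\to\infty$. Beyond that, the proof is routine.
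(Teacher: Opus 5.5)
Your proof is correct and follows essentially the same route as the paper: amplify a single $\psi_j$ as a direct sum of copies padded by a zero block (the paper's $\left(\psi_{j}\left(a\right)\otimes I_{c_{k}}\right)\oplus0_{r_{k}}$) and choose $j$ diagonally as $k\to\infty$. The differences are only bookkeeping. The paper indexes the diagonal by $j$, using thresholds $i_j$ with $n_k>j\,m_j$, and bounds the normalized rank by $\frac{c_k}{c_k+1}\left(1-\frac{1}{2d}\right)$. You index the diagonal by $d$ instead and use the additive bound $rk\left(\psi_j\left(a\right)\right)-\frac{r}{n}$.
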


The following is very similar to the proof of an analogous proposition
for permutations \cite[Proposition 6.1]{AP_PStability}.
\begin{proof}
We can construct an increasing sequence $\left\{ i_{j}\right\} _{j}$
such that $n_{k}>j\cdot m_{j}$ for any $k>i_{j}$. Now, for each
$i_{j}<k\le i_{j+1}$ let $n_{k}=c_{k}m_{j}+r_{k}$, with $r_{k}<m_{j}$
and $c_{k}\ge j$. We then set 
\[
\rho_{k}\left(a\right)\vcentcolon=\left(\psi_{j}\left(a\right)\otimes I_{c_{k}}\right)\oplus0_{r_{k}}\in M_{n_{k}}\left(K\right).
\]
Let $d\in\NN$ and let $j$ be large enough so that $\psi_{j}$ is
a $2d$-approximation and $j\ge2d$. Then for any nonzero $a\in A_{\le d}$,
\[
rank\left(\rho_{k}\left(a\right)\right)=c_{k}\cdot rank\left(\psi_{j}\left(a\right)\right)\ge c_{k}\cdot\left(1-\frac{1}{2d}\right)m_{j}
\]
\begin{align*}
\implies rk\left(\rho_{k}\left(a\right)\right) & \ge\frac{c_{k}\cdot\left(1-\frac{1}{2d}\right)m_{j}}{c_{k}m_{j}+r_{k}}\ge\frac{c_{k}}{c_{k}+1}\left(1-\frac{1}{2d}\right)\\
 & \ge\left(1-\frac{1}{2d}\right)^{2}>1-\frac{1}{d}.
\end{align*}
Thus, $\rho_{k}$ is a linear sofic representation.
\end{proof}
The analogue for algebras of residual finiteness in groups is the
property of being residually finite dimensional (RFD); Every nonzero
$a\in A$ is mapped nontrivially by some finite dimensional representation.
It is clear that an amenable algebra that is weakly stable must be
RFD. 

In the sofic context, constructing a sofic representation for a residually
finite group given by actual homomorphisms is straightforward. By
contrast, finding a linear sofic representation given by true representations
for an RFD algebra is a difficult task, and we do not know if it is
true in general.

When the algebra is a group algebra, however, we have seen that a
sofic representation is a linear sofic representation of the underlying
group algebra (provided that $K\Gamma$ has no zero divisors). Thus:
\begin{thm}
\label{thm:weak stability}Let $K$ be a field and $A$ be an amenable
$K$-algebra with no zero divisors. 
\begin{enumerate}
\item \label{enu:weak1}$A$ is weakly stable if and only if there is a
sequence $\psi_{n}:A\to M_{n_{k}}\left(K\right)$ of representations
such that $\left(\psi_{n}\right)$ is a linear sofic representation.
\item \label{enu:weak2}If $A=K\Gamma$ is the group algebra of an amenable
group $\Gamma$, then $A$ is weakly stable if and only if $\Gamma$
is residually finite.
\end{enumerate}
\end{thm}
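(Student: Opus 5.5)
For part (\ref{enu:weak1}), the forward direction comes straight from the definitions. Assume $A$ is weakly stable. Proposition \ref{prop:exist} gives a linear sofic representation $\phi_k:A\to M_{n_k}(K)$. For $\varepsilon=1/j$, weak stability provides some $d_j$. Pick $k_j$ so large that $\phi_{k_j}$ is a $d_j$-approximation; then there is a true representation $\psi_j:A\to M_{n_{k_j}}(K)$ with $rk(\phi_{k_j}(s)-\psi_j(s))<1/j$ for all $s\in S$. It remains to see that $(\psi_j)$ is again a linear sofic representation. Both sequences are asymptotic homomorphisms agreeing on $S$ up to vanishing rank, so $\psi_j$ and $\phi_{k_j}$ agree up to vanishing rank on every monomial, hence on each $A_{\le d}$. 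By subadditivity of rank, the separation condition in Proposition \ref{prop:Eq_LSA} (\ref{enu:Sep-1}) transfers from $(\phi_{k_j})$ to $(\psi_j)$, including for any subsequence and for elements of $A_{K^\omega}$.

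For the converse, let $(\psi_j)$ be a sequence of true representations forming a linear sofic representation. I would argue by contradiction. If $A$ is not weakly stable, there are $\varepsilon>0$ and, for each $d$, a $d$-approximation $\phi_d:A\to M_{n_d}(K)$ that is $\varepsilon$-far on $S$ from every true representation. Then $(\phi_d)_d$ is a linear sofic representation by Proposition \ref{prop:Eq_LSA}. Two cases arise.

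\begin{itemize}
\item If $n_d$ is bounded along a subsequence, Proposition \ref{prop:FD} and the first case of the proof of Theorem \ref{thm:main} show that $\phi_d$ is itself a true representation for large $d$. This contradicts the choice of $\phi_d$.
\item Otherwise $n_d\to\infty$ along a subsequence. Proposition \ref{prop:amplification} then gives true representations $\rho_d:A\to M_{n_d}(K)$ forming a linear sofic representation. By Theorem \ref{thm:main}, $(\phi_d)$ and $(\rho_d)$ are conjugate. So for large $d$ there are $M_d\in GL_{n_d}(K)$ with $rk(\phi_d(s)-M_d\rho_d(s)M_d^{-1})<\varepsilon$ for $s\in S$. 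Since $M_d\rho_dM_d^{-1}$ is a true representation, this is again a contradiction.
\end{itemize}

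The definition of weak stability quantifies over all $n$ and all $d$-approximations, while conjugacy is stated along one sequence. Passing to subsequences, and applying Theorem \ref{thm:main} to the reindexed sequences, reconciles the two; this is routine.

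For part (\ref{enu:weak2}), weak stability makes $A$ RFD, as noted before the theorem. More concretely, part (\ref{enu:weak1}) gives true representations $\psi_k$ with $rk(\psi_k(g-1))\to1$ for $g\neq1$. Restricting each $\psi_k$ to $\Gamma$, the images are invertible for large $k$ on the relevant finite sets, because $\psi_k(g)\psi_k(g^{-1})=\psi_k(1)$ is close to a full-rank idempotent. The kernels of the resulting homomorphisms $\Gamma\to GL$ then avoid any given $g\neq1$. The images are finitely generated linear groups, hence residually finite by Mal'cev, so $\Gamma$ is residually finite. If $\Gamma$ is not finitely generated, I would apply the argument to each finitely generated subgroup, or simply work with the finitely generated hypothesis of the paper.

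Conversely, suppose $\Gamma$ is residually finite. Take a chain of finite-index normal subgroups $N_j$ with trivial intersection. The left regular actions on $\Gamma/N_j$ give a sofic representation of $\Gamma$ by true homomorphisms into $S_{m_j}$. By Remark \ref{rem:sofic}, their linear extensions are linear sofic representations of $K\Gamma$, and they are true representations. Part (\ref{enu:weak1}) then gives weak stability.

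I expect the main obstacle to be in part (\ref{enu:weak1}): making the transfer ``close on $S$ implies linear sofic'' rigorous on $A_{K^\omega}$. The point is that the perturbation has rank $o(n)$ uniformly on the finite-dimensional space $A_{\le d}$, which follows by expanding in the basis $S^d$.
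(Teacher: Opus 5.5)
Your proposal is correct and follows the paper's route. For (\ref{enu:weak1}) you combine Proposition \ref{prop:amplification} with Theorem \ref{thm:main}, and for (\ref{enu:weak2}) you use Remark \ref{rem:sofic} and Mal'cev's theorem, while also supplying details the paper leaves implicit: the ``easy'' direction via rank subadditivity, the bounded-$n_d$ case via Proposition \ref{prop:FD}, and the subsequence bookkeeping.

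Two small corrections in (\ref{enu:weak2}).
\begin{enumerate}
\item Since representations need not be unital, $\psi_k(g)$ is generally not invertible in $M_{n_k}(K)$. No approximation argument is needed, though. The element $e_k=\psi_k(1)$ is an idempotent with $\psi_k(g)=e_k\psi_k(g)e_k$, so $\psi_k$ restricts exactly to a homomorphism $\Gamma\to GL(e_kK^{n_k})$. Its kernel misses $g$ as soon as $rk(\psi_k(g-1))>0$.
\item Rely on the standing finite-generation hypothesis. Passing to finitely generated subgroups would not work: $\mathbb{Q}$ is locally residually finite but not residually finite.
\end{enumerate}
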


\begin{proof}
Suppose there is a sequence $\left(\psi_{n}\right)$ of representations
and let $\phi_{k}:A\to M_{n_{k}}\left(K\right)$ be a linear sofic
representation. By Proposition \ref{prop:amplification} there is a
sequence $\rho_{k}:A\to M_{n_{k}}\left(K\right)$ of representations
that is a linear sofic representation. By Theorem \ref{thm:main},
$\left(\phi_{k}\right)$ and $\left(\rho_{k}\right)$ are conjugate,
so that $\left(\phi_{k}\right)$ lifts to a representation into $\prod_{k}M_{n_{k}}\left(K\right)$
(some conjugation of $\left(\rho_{k}\right)$). The other direction
is easy, which proves $($\ref{enu:weak1}$)$.

Suppose now that $A=K\Gamma$ where $\Gamma$ is a residually finite
amenable group. Then there is a sequence of homomorphisms of $\Gamma$
that is a sofic representation (cf \cite[Corollary 6.2]{AP_PStability}).
We have seen (Remark \ref{rem:sofic}) that the linear extension of
a sofic representation is a linear sofic representation of $K\Gamma$.
It follows from $($\ref{enu:weak1}$)$ that $K\Gamma$ is weakly
stable. Conversely, if $K\Gamma$ is weakly stable then a linear sofic
representation of $K\Gamma$ will be close to one given by true representations.
Thus, $\Gamma$ is residually linear and hence residually finite by
a famous theorem of Malcev, proving $($\ref{enu:weak2}$)$.
\end{proof}
According to a famous conjecture of Kaplansky, $K\Gamma$ has no zero
divisors whenever $\Gamma$ is a torsion-free group. This conjecture
is still open, but it is known for large classes of groups. For instance,
Kropholler, Linnell and Moody \cite{MR964842} have proved that if
$\Gamma$ is a torsion free elementary amenable group, then $K\Gamma$
embeds in a division ring and therefore has no zero divisors.

\subsection{\label{subsec:abels}An algebra that is weakly stable but not stable}

Recall that Abels' group $A_{p}$ (see \cite{MR564423}) is the linear
group 
\[
\left\{ \begin{pmatrix}1 & * & * & *\\
 & p^{m} & * & *\\
 &  & p^{n} & *\\
 &  &  & 1
\end{pmatrix}\quad\st\quad*\in\ZZ\left[\frac{1}{p}\right],m,n\in\ZZ\right\} \sq GL_{4}\left(\mathbb{Q}\right),
\]
where $p$ is a prime. It is a finitely presented solvable group,
and hence elementary amenable. Since no element in $A_{p}$ has nontrivial
roots of unity as eigenvalues, it is also torsion-free. By the discussion
above, for any field $K$ the group algebra $KA_{p}$ has no zero
divisors. It is therefore weakly stable by Theorem \ref{thm:weak stability}.

Becker, Lubotzky and Thom have proved in \cite[Corollary 8.7]{MR3999445}
that $A_{p}$ is not stable in permutations (although it is weakly
stable in permutations by \cite[Theorem 1.1]{AP_PStability}). Based
on this proof, Elek and Grabowski proved in \cite[Theorem 4]{ElekGrabowski}
that $A_{p}$ is not stable in the rank metric (when we consider maps
into $GL_{n}\left(\CC\right)$). Observe that this precisely means
that the algebra $\CC A_{p}$ is not stable (their proof in fact holds
over any field $K$). It follows that $\CC A_{p}$ is weakly stable,
but not stable.

\bibliographystyle{plain}
\bibliography{bibi}

\end{document}